\newtheorem{remark}{Remark}[section]
\definecolor{darkred}{rgb}{0.85,0,0}
\definecolor{green}{rgb}{0,0.7,0}
\def\al{\alpha}
\def\pt{\partial}
\def\dalt{{\pt_t^\al}}
\def\bdalt{{\bar{\pt}_\tau^\al}}
\def\R{{\mathbb R}}
\def\C{{\mathbb C}}
\def\d{{\mathrm d}}
\def\i{{\mathrm i}}
\def\ep{\epsilon}
\def\II{(\Omega)}
\DeclareMathOperator\Real {Re}
\DeclareMathOperator\Imag {Im}
\begin{document}

\title{High-order Time Stepping Schemes for Semilinear Subdiffusion Equations
\thanks{The research of K. Wang is partially supported by a Hong Kong RGC grant (Project No. 15300817),
and that of Z. Zhou by a start-up grant from the Hong Kong Polytechnic University and Hong Kong RGC grant No. 25300818. }}

\author{Kai Wang\thanks{Department of Applied Mathematics, The Hong Kong Polytechnic University, Kowloon, Hong Kong.
(\texttt{kai-r.wang@connect.polyu.hk})}
\and Zhi Zhou\thanks{Department of Applied Mathematics, The Hong Kong Polytechnic University, Kowloon, Hong Kong.
(\texttt{zhizhou@polyu.edu.hk, zhizhou0125@gmail.com})}}
\date{\today}

\maketitle
\begin{abstract}
The aim of this paper is to develop and analyze high-order time stepping schemes for solving semilinear subdiffusion equations.
We apply the $k$-step BDF convolution quadrature to discretize the time-fractional derivative with order $\alpha\in (0,1)$,
and modify the starting steps in order to achieve optimal convergence rate.
This method has already been well-studied for the linear fractional evolution equations in Jin, Li and Zhou \cite{JinLiZhou:correction},
while the numerical analysis for the nonlinear problem is still missing in the literature.
By splitting the nonlinear potential term into an irregular linear part
and a smoother nonlinear part, and using the generating function technique,
we prove that the convergence order of the corrected BDF$k$ scheme is $O(\tau^{\min(k,1+2\alpha-\epsilon)})$,
without imposing further assumption on the regularity of the solution.
Numerical examples are provided to support our theoretical results.
\\

{\bf Keywords:} semilinear subdiffusion, convolution quadrature, $k$-step BDF, initial correction, error estimate.\\

{\bf AMS subject classifications 2010:}
 65M60, 65N30, 65N15, 35R11
\end{abstract}

\setlength\abovedisplayskip{3.5pt}
\setlength\belowdisplayskip{3.5pt}

\section{\bf Introduction}\label{Se:intr}
Fractional partial differential equations (PDEs) have been drawing increasing attention over
the past several decades, due to their capability to describe anomalous diffusion processes,
in which the mean square variance of particle displacements grow sublinearly/superlinear with the time,
instead of the linear growth for a Gaussian process.
Nowadays those models have been successfully employed in many practical applications,
including dynamics of single-molecular protein \cite{kou2008stochastic},
flow in highly heterogeneous aquifer\cite{berkowitz2002physical} and
thermal diffusion in fractal domains\cite{nigmatullin1986realization}, to name but a few; see \cite{Metzler:2014} for an extensive list.

The aim of this paper is to study high-order time stepping schemes for solving the initial-boundary
value problem for the semilinear subdiffusion equation:
\begin{equation}\label{Eqn:fde}
\left\{\begin{aligned}
\dalt u-\Delta u&=f(u)&&\text{ in } \Omega\times (0,T),\\
u &=0 &&\text{ on  }  \pt\Omega\times (0,T),\\
u(0)&=u_0 &&\text{ in }\Omega ,
\end{aligned}\right.
\end{equation}
where $\Omega$ denotes  a  bounded, convex
domain in $\R^d$ with smooth boundary,  and $\Delta$ denotes the Laplacian on $\Omega$
with a homogenous Dirichlet boundary condition. Here $\dalt u$ denotes the left-sided Caputo
fractional derivative of order $\al\in(0,1)$ with respect to
$t$ and it is defined by \cite[pp.\,91]{KilbasSrivastavaTrujillo:2006}
\begin{equation*}\label{McT}
\partial_t^\alpha u(t):= \frac{1}{\Gamma(1-\alpha)} \int_0^t(t-s)^{-\alpha}u'(s)\, \d s ,\quad \mbox{with}\quad \Gamma(z):=\int_0^\infty s^{z-1}e^{-s}\d s.
\end{equation*}

Throughout the paper, we assume that the initial data $u_0$ is smooth and compatible with the homogeneous Dirichlet boundary condition, 
and $f:\mathbb{R}\rightarrow \mathbb{R}$ is a globally smooth function, e.g.,
$f \in C^3(\mathbb{R})$. Moreover,
we assume that the nonlinear subdiffusion problem \eqref{Eqn:fde} has a unique global solution $u\in C([0,T]\times\bar\Omega)$.
One typical example is the time-fractional Allen-Cahn equation, i.e., $f(u)=u-u^3$,
whose well-posedness and smoothing properties have already been investigated in \cite{DuYangZhou:AllenCahn}.

High-order time stepping schemes for solving the linear time-fractional evolution problems
have been intensively studied in recent years;
see \cite{JinLazarovZhou:overview} (and the references therein) for a concise overview.
Roughly speaking, there are two prominent types of schemes: piecewise polynomial
interpolation (e.g., \cite{alikhanov2015new, GaoSunZhang:2014, lin2007finite, Sun:2006}) and convolution quadrature (CQ)
(e.g., \cite{CuestaLubichPalencia:2006, Jin:SISC2016, YusteAcedo:2005, Zeng:2013}).
To the first group belongs the popular method using a piecewise linear interpolation (also known
as L1 scheme). Lin and Xu \cite{lin2007finite} developed the scheme for fractional diffusion, and analyzed the
stability and convergence rate; see also \cite{Sun:2006}. The discretization has a local truncation error $O(\tau^{2-\alpha})$ where $\tau$ denotes the step size in time,
provided that the solution is smooth enough in time.  The argument could be extended to high-order methods
using piecewise polynomial interpolation \cite{alikhanov2015new, GaoSunZhang:2014}.
In the second group, CQ developed by Lubich \cite{Lubich:1986, Lubich:1988} provided a systematic framework
to construct high-order numerical schemes, and has been the
foundation of many early works. Due to its particular construction, it naturally inherits the stability and accuracy of standard linear multistep methods,
which greatly facilitates the analysis of resulting numerical schemes.
However, for both techniques with uniform meshes, the desired convergence rates can be obtained only
if data is sufficiently smooth and compatible, which is generally not valid.
Otherwise, most of popular schemes can only achieve a first-order accuracy \cite{JinLazarovZhou:L1, JinLiZhou:correction}.
For the linear problem, the desired high-order convergence rates can be restored by correcting the first several time steps
\cite{Jin:SISC2016, JinLiZhou:correction, Yan:2018L1}, even for nonsmooth problem data.
See also \cite{Stynes:2017, LiaoLiZhang:2018} for the application of {L1} scheme with graded meshes,
\cite{McLeanMustapha:2015, Mustapha:2014, MustaphaMcLean:2013} for an analysis of discontinuous
Galerkin method 
and \cite{ChenXuHesthaven:2015, LiXu:2009, Zayernouri:2015uni} for studies of spectral methods.

However, there is fewer work on nonlinear subdiffusion problems. The first rigorous analysis was given in \cite{JinLiZhou:nonlinear},
where Jin et al. proposed a general framework for mathematical and numerical analysis of the nonlinear equation \eqref{Eqn:fde}
with a globally Lipschitz continuous potential term $f(u)$. A time stepping scheme based on backward Euler CQ scheme or L1 method was studied and
a uniform-in-time convergence rate $O(\tau^\alpha)$ was proved. Then it was proved in \cite{Karaa:nonlinear}
that  the convergence rate of the backward Euler CQ scheme is $O(\tau)$ at a fixed time even for the nonsmooth data.
 As far as we know, there is no theoretical study on high-order schemes for the nonlinear problem \eqref{Eqn:fde} based on confirmed solution regularity.
Therefore, in this paper, we aim to study high-order time stepping schemes based on CQ generated by $k$-step BDF method.
This work is motivated by our preceding studies on the corrected BDF$k$ schemes for linear subdiffusion equations \cite{Jin:SISC2016, JinLiZhou:correction}.

To discretize the fractional derivative, we let $0=t_0<t_1<\ldots<t_N=T$ be a uniform partition of the time interval $[0,T]$, with grid
points $t_n=n\tau$ and step size $\tau=T/N$. Upon rewriting the Caputo derivative
$\partial_t^\alpha u$ as a Riemann-Liouville one \cite[pp. 91]{KilbasSrivastavaTrujillo:2006}, we consider the following fully implicit time stepping scheme:
for the given initial value $u_0$, find $u_n$, $n=1,2,\ldots, N$, such that
\begin{align}\label{TD-scheme}
\begin{aligned}
&\bar\partial_\tau^\alpha (u_n-u_0)-\Delta u_n= f(u_n) ,
\end{aligned}
\end{align}
where $u_n,\,n=1,2,\cdots,N$ are the approximations to the exact solutions $u(t_n)$, and $\bdalt\varphi_n$ denotes the convolution quadrature generated by $k$-step BDF, $k=1,2,\cdots,6$
with the definition %
\begin{equation}\label{Eq:BDF-CQ}
  \bdalt\varphi_n:=\frac{1}{\tau^\al}\sum_{i=0}^{n} {\omega_i^{(\alpha)}}\varphi_{n-i}.
\end{equation}
The coefficients $\{\omega_i^{(\alpha)}\}_{i=0}^{\infty}$ can be computed either by the fast
Fourier transform\cite{podlubny1998fractional,sousa2012approximate} or recursion\cite{wu2014determination} in the following series expansion
\begin{equation}\label{eqn:gen-k}
  \delta_\tau(\xi)^\al=\frac{1}{\tau^\al} \sum_{i=0}^{\infty} {\omega_i^{(\alpha)}}\xi^i\quad\text{with}\quad \delta_\tau(\xi)=\frac{1}{\tau} \sum_{i=1}^{k} \frac{1}{i}(1-\xi)^i.
\end{equation}

For linear subdiffusion problem, it has been shown in \cite{JinLiZhou:correction} that the scheme \eqref{TD-scheme} is only first-order accurate in general.
However, the optimal  order $O(\tau^k)$ of the BDF$k$ scheme could be restored by correcting the first $k-1$ steps.
For example, we split the source term $f$ into $f(t)=f(0) + (f(t)-f(0))$ and approximate $f(0)$ by
$\bar\partial_\tau \partial_t^{-1}f(0)$, with a similar treatment of the initial data. This leads to
a simple modification at the first step and restores the $O(\tau^2)$ accuracy for any fixed $t_n>0$ \cite{LubichSloanThomee:1996, CuestaLubichPalencia:2006, Jin:SISC2016}.
This motivates us to decompose the nonlinear potential term $f(u)$ by
\begin{equation}\label{eqn:taylor}
 f(u(t))= f(u_0) + f'(u_0) (u(t)-u_0) + R(u(t);u_0).
 \end{equation}
Then the residue part, $R(u(t);u_0)=O((u(t)-u_0)^2)$, is more regular in the time direction.
As a result, the semilinear equation can be reformulated by
\begin{equation}\label{eqn:fde-m}
\begin{aligned}
\dalt u(t)-(\Delta + f'(u_0)I) u(t)&= f(u_0) - f'(u_0)u_0 + R(u(t);u_0),
\end{aligned}
\end{equation}
where $I$ denotes the identity operator. Therefore, by letting
\begin{equation}\label{eqn:A}
g_0 = f(u_0) - f'(u_0)u_0\quad\text{and} \quad A=\Delta + f'(u_0)I,
\end{equation}
we can modify the BDF$k$ scheme \eqref{TD-scheme} by
\begin{equation}\label{eqn:BDF-CQ-m}
\left\{\begin{aligned}
&\bdalt (u-u_0)_n-Au_n=g_0+a_n^{(k)}(Au_0+g_0)+R(u_n;u_0),\quad &&1\leq n\leq k-1,\\
&\bdalt (u-u_0)_n-Au_n=g_0+R(u_n;u_0),\quad &&k\leq n\leq N,
\end{aligned}\right.
\end{equation}
where the unknown coefficients $a_n^{(k)}$ were given in \cite[Table 1]{JinLiZhou:correction}.
\begin{table}[h!]
  \centering
    \caption{The coefficients $a_n^{(k)}$}\label{Tab:ank}
    \vskip-7pt
  \begin{tabular}{|c|cc ccc|}%
  \hline
   BDF$k$ & $a_1^{(k)}$ & $a_2^{(k)}$& $a_3^{(k)}$ & $a_4^{(k)}$ & $a_5^{(k)}$\\[0.5ex]\hline%
  $k=2$ & $\frac{1}{2}$   & &  &  &\\[0.5ex]\hline%
  $k=3$ & $\frac{11}{12}$ & $-\frac{5}{12}$ & & &\\[0.5ex]\hline%
  $k=4$ & $\frac{31}{24}$ & $-\frac{7}{6}$ & $\frac{3}{8}$ &  &  \\[0.5ex]\hline
  $k=5$ & $\frac{1181}{720}$ & $-\frac{177}{80}$ & $\frac{341}{240}$ & $-\frac{251}{720}$ &  \\[0.5ex]\hline
  $k=6$ & $\frac{2837}{1440}$ & $-\frac{2543}{720}$ & $\frac{17}{5}$ & $-\frac{1201}{720}$ & $\frac{95}{288}$ \\[0.5ex]
  \hline
\end{tabular}
\end{table}

By rearranging terms, the modified BDF$k$ scheme \eqref{eqn:BDF-CQ-m} is equivalent to
\begin{equation}\label{eqn:BDF-CQ-m-r}
\left\{\begin{aligned}
&\bdalt (u-u_0)_n-\Delta u_n= a_n^{(k)}(\Delta u_0 + f(u_0)) + f(u_n),\quad &&1\leq n\leq k-1,\\
&\bdalt (u-u_0)_n-\Delta u_n=f(u_n),\quad &&k\leq n\leq N,
\end{aligned}\right.
\end{equation}
which is consistent to the BDF$k$ scheme for the linear subdiffusion problem \cite{Jin:SISC2016, JinLiZhou:correction}.

The main result of this paper is to derive an error estimate in $L^\infty(\Omega)$ for the novel time stepping scheme \eqref{eqn:BDF-CQ-m}.
In particular, if $u_0\in \{u\in C(\bar\Omega),~u=0~\text{on}~\partial\Omega,~\text{and}~ \Delta u \in C(\bar\Omega) \}$, we prove that
(see Theorem \ref{thm:error})
\begin{equation}\label{eqn:err-0}
\| u_n - u(t_n) \|_{L^\infty(\Omega)} \le c_T  t_n^{\alpha - \min(k,1+2\alpha-\ep)} \tau^{\min(k,1+2\alpha-\ep)}.
\end{equation}
This estimate is interesting, because the source term $f(u)\in {W^{1+\alpha-\ep,1}(0,T;L^\infty\II)}$ in general,
which is nonsmooth in the time direction, and intuitively one only expects the convergence order $O(\tau^{\min(k,1+\alpha-\epsilon)})$ \cite[Table 8]{JinLiZhou:correction}.
However, the estimate \eqref{eqn:err-0} indicates that the best convergence rate of the BDF$k$ scheme is almost $O(\tau^{1+2\alpha})$.
The restriction of the convergence order comes from the low regularity of the remainder $R(u;u_0)$,
even though the initial data $u_0$ is smooth and compatible with boundary condition.
This phenomena contrasts sharply with its normal parabolic counterpart, i.e., $\alpha=1$.
For example, in \cite{CrouzeixThomee}, it has been proved that the time stepping schemes of the semilinear parabolic equation
fail to achieve the best convergence rate only if the initial data is not regular enough.

The rest of the paper is organized as follows. In section \ref{sec:prelim}, we provide some preliminary results
about the solution regularity which will be intensively used in error estimation. The error analysis of the time stepping scheme \eqref{eqn:BDF-CQ-m}
is established in section \ref{sec:error}.
Then the fully discrete scheme are analyzed in section \ref{sec:fully}.
Finally, in section \ref{sec:numerics}, we present some numerical results which support and illustrate our theoretical findings.
Throughout this paper, the notation $c$ denotes a generic constant, which may vary at different occurrences, but it
is always independent of the time step size $\tau$ and spatial mesh size $h$.

\section{Preliminary results}\label{sec:prelim}
In this section, we shall present some regularity results which will be actively used in the next section.
As we introduced, we always assume that the semilinear subdiffion problem \eqref{Eqn:fde}
has a unique global solution $u\in C([0,T]\times\bar \Omega)$ (e.g., the time-fractional Allen-Cahn equation \cite{DuYangZhou:AllenCahn}).

\subsection{{Solution representation}}
First, we introduce a representation of the solution to problem \eqref{Eqn:fde} by Laplace transform. For simplicity, we let
$g(t):=f(u(t))$ and $w(t):=u(t)-u_0$. Then it is easy to see that the function $w(t)$ satisfies the equation
\begin{equation*}
\dalt w(t)-\Delta w(t)=\Delta u_0+g(t)
\end{equation*}
with initial condition $w(0)=0$. Taking Laplace transform, denoted by $\hat{}$ , we have
\begin{equation*}
  z^\al{\hat{w}}(z)-\Delta {\hat{w}}(z)=z^{-1}\Delta u_0+\hat{g}(z),
\end{equation*}
which implies that ${\hat{w}}(z)=(z^\al-\Delta )^{-1}(z^{-1}\Delta u_0+\hat{g}(z))$. With inverse Laplace transform and convolution rule,
the solution $u(t)$ can be explicitly expressed by
\begin{equation}\label{eqn:sol-rep}
  u(t)=(I + F(t)\Delta) u_0+\int_{0}^{t} E(t-s)f(u(s))\d s,
\end{equation}
where the operators $F(t)$ and $E(t)$ are defined by
\begin{equation}\label{eqn:op}
  F(t)=\frac{1}{2\pi\i}\int_{\Gamma_{\theta,\delta}} e^{zt}z^{-1}(z^\al-\Delta )^{-1}\d z\quad\text{and}\quad
  E(t)=\frac{1}{2\pi\i}\int_{\Gamma_{\theta,\delta}} e^{zt}(z^\al-\Delta )^{-1}\d z,
\end{equation}
respectively, where $\Gamma_{\theta,\delta}$ denotes the integral contour
\begin{equation*}
  \Gamma_{\theta,\delta}=\{z\in\mathbb{C}:|z|=\delta,|\arg z|\leq \theta\} \cup \{z\in\mathbb{C}: z=\rho e^{\pm\i\theta},\rho\geq\delta\},
\end{equation*}
oriented with an increasing imaginary part with a fixed angle $\theta\in(\pi/2,\pi)$. 

In this paper, we shall derive some estimates in $L^\infty(\Omega)$ norm, which requires the resolvent estimate.
Let us consider the second-order partial differential operator
\begin{equation*}
 L u = -\Delta u + q u,
\end{equation*}
with the homogeneous Dirichlet boundary condition.
Here we assume that $q\in L^\infty(\Omega)$ and $q(x)\ge 0$ for all $x\in \Omega$. This implies that $L$ is positively definite, i.e.,
\begin{equation*}
 (L u , u) \ge c \| \nabla u \|_{L^2(\Omega)}^2,\qquad \text{for all}~~u\in H_0^1(\Omega).
\end{equation*}
Then the following resolvent estimate holds: for any angle $\phi\in(\pi/2,\pi)$ (see \cite[Theorem 1.1]{Bakaev}, \cite[Theorem 2.1]{Bakaev:2003} or \cite[Theorem 1]{Stewart})
\begin{equation}\label{eqn:resol}
  \| (z + L)^{-1}  \|_{C(\bar\Omega)\rightarrow C(\bar\Omega)} \le c  |z|^{-1}   \qquad \text{for}~z\in \Sigma_{\phi}= \{ z\in\mathbb{C}\backslash \{ 0 \}: \text{arg}(z) \in (-\phi, \phi)  \}.
\end{equation}

From now on, we assume that the initial condition $u_0$ is smooth enough and
 compatible to the homogenous Dirichlet boundary condition, i.e.,
\begin{equation}\label{eqn:ini-D}
u_0 \in D=\{u\in C(\bar\Omega),~u=0~\text{on}~\partial\Omega,~\text{and}~ \Delta u \in C(\bar\Omega) \}.
\end{equation}
Then by the resolvent estimate \eqref{eqn:resol}, it is easy to observe that the operators $F$ and $E$, defined in \eqref{eqn:op}, satisfy
the following regularity estimate that for  $\ell=0,1,2,\ldots,$
\begin{equation}\label{eqn:smoothing}
 t \| \partial_t^{(\ell)}E(t) v  \|_{L^\infty(\Omega)}  + \| \partial_t^{(\ell)}F(t) v  \|_{L^\infty(\Omega)} \le c t^{\alpha-\ell} \| v \|_{L^\infty(\Omega)}\qquad \forall ~~v\in C(\bar\Omega).
\end{equation}
The estimates with $L^2(\Omega)$-norm have already been confirmed in \cite[Lemma 3.4]{JinLiZhou:nonlinear}
by using resolvent estimate.
The proof of \eqref{eqn:smoothing} is similar to that, and hence is omitted here.

\subsection{{Solution regularity}}

With the help of \eqref{eqn:smoothing}, we are ready to state the following lemma on the regularity of the solution to the nonlinear
subdiffusion equation \eqref{Eqn:fde}.

\begin{theorem}\label{thm:reg-u}
We assume that $u_0\in D$ with the space $D$ defined by \eqref{eqn:ini-D}. Besides, suppose that the  problem \eqref{Eqn:fde}
has a unique global solution $u\in C([0,T]\times\Omega)$. Then
$u\in C^{\alpha}([0,T];C(\bar\Omega))\cap C^\ell((0,T];C(\bar\Omega))$, $\ell=1,2,3$, 
and it satisfies the a priori estimate
\begin{equation}\label{Eq:inf}
  \| \partial_t^\ell u(t) \|_{L^\infty(\Omega)} \le c t^{\alpha-\ell} ,\quad \text{for}~\ell=1,2,3,
\end{equation}
where the constant $c$ depends on $\alpha, T$ and $u_0$.
\end{theorem}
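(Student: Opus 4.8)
The plan is to work directly from the solution representation \eqref{eqn:sol-rep} and the smoothing estimate \eqref{eqn:smoothing}, bootstrapping in the order $\ell=1,2,3$ and closing each bound with a fractional Gronwall-type inequality. First I would recast \eqref{eqn:sol-rep} so that the constant part of the source is absorbed into the smoothing operator $F$. Since \eqref{eqn:smoothing} with $\ell=0$ gives $\|F(t)v\|_{L^\infty(\Omega)}\le ct^\al\|v\|_{L^\infty(\Omega)}$, we have $F(0)=0$, and because $\partial_t F=E$ by \eqref{eqn:op} we also have $\int_0^t E(t-s)\,\d s=F(t)$. Adding and subtracting $f(u_0)$ in the source, with $h_0=\Delta u_0+f(u_0)\in C(\bar\Omega)$ and $\phi(s)=f(u(s))-f(u_0)$, the representation becomes
\begin{equation*}
u(t)-u_0=F(t)h_0+\int_0^t E(t-s)\phi(s)\,\d s,\qquad \phi(0)=0.
\end{equation*}
The vanishing of $\phi$ at $s=0$ is exactly the feature that makes the first differentiation clean. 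As a preliminary step I would prove $u\in C^\al([0,T];C(\bar\Omega))$: the first term is Hölder since $\|(F(t)-F(t'))h_0\|_{L^\infty(\Omega)}\le c\int_{t'}^t\tau^{\al-1}\,\d\tau\,\|h_0\|_{L^\infty(\Omega)}\le c(t-t')^\al\|h_0\|_{L^\infty(\Omega)}$ (using $t^\al-t'^\al\le(t-t')^\al$), while the convolution is controlled by the standard splitting together with $\|E(\sigma)v\|_{L^\infty(\Omega)}\le c\sigma^{\al-1}\|v\|_{L^\infty(\Omega)}$ and the boundedness of $\phi$. Because $f\in C^3$ and $u$ is bounded, every composition $f^{(j)}(u)$ is bounded, so the regularity of $f(u)$ is governed entirely by that of $u$; this is what allows the bootstrap to close.

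For $\ell=1$, differentiating the reformulated representation and using $\phi(0)=0$ produces, with no boundary term, $u'(t)=F'(t)h_0+\int_0^t E(t-s)f'(u(s))u'(s)\,\d s$. Taking $L^\infty(\Omega)$ norms and invoking \eqref{eqn:smoothing} and $\|f'(u(s))\|_{L^\infty(\Omega)}\le c$ yields the weakly singular Volterra inequality $\|u'(t)\|_{L^\infty(\Omega)}\le ct^{\al-1}+c\int_0^t(t-s)^{\al-1}\|u'(s)\|_{L^\infty(\Omega)}\,\d s$. Here the data $t^{\al-1}$ is locally integrable and the kernel is weakly singular, so the fractional Gronwall inequality applies and returns $\|u'(t)\|_{L^\infty(\Omega)}\le ct^{\al-1}$, which is the claim for $\ell=1$.

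For $\ell=2,3$ I would argue by induction, and here the genuine difficulty appears: $\partial_s^{\ell-1}\phi(s)$ is itself singular at $s=0$ (of order $s^{\al-\ell+1}$ by the lower-order bounds already obtained), so the naive differentiation of the convolution produces a divergent boundary term $E(t)\,\partial_s^{\ell-1}\phi(0)$, and, worse, the resulting data $t^{\al-\ell}$ is no longer integrable, so one cannot simply run a Gronwall argument on $[0,t]$. The remedy is to split the convolution at the midpoint, $\int_0^t=\int_0^{t/2}+\int_{t/2}^t$. On $[0,t/2]$ the kernel $E(t-s)$ has argument $\ge t/2$, so the $t$-derivatives fall on $E$ and are controlled by \eqref{eqn:smoothing} at argument bounded below by $t/2$; on $[t/2,t]$, after the change of variables $\sigma=t-s$, the derivatives fall on $\phi$, which is then evaluated away from the singular point $s=0$. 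Expanding $\partial_s^{\ell}\phi$ by the Fa\`a di Bruno formula, all cross terms (such as $f''(u)(u')^2$ for $\ell=2$, and $f'''(u)(u')^3$, $f''(u)u'u''$ for $\ell=3$) are \emph{strictly} less singular than the target $t^{\al-\ell}$, so the only self-referential contribution is $\int_{t/2}^t(t-s)^{\al-1}f'(u(s))u^{(\ell)}(s)\,\d s$, localized to $[t/2,t]$.

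The main obstacle is precisely this closure for $\ell=2,3$, since the self-referential term cannot be absorbed by a global Gronwall inequality because $t^{\al-\ell}$ is not integrable. I would close it with the quantity $M_\ell(\bar t)=\sup_{0<s\le\bar t}s^{\ell-\al}\|u^{(\ell)}(s)\|_{L^\infty(\Omega)}$: substituting $\|u^{(\ell)}(s)\|_{L^\infty(\Omega)}\le M_\ell(\bar t)s^{\al-\ell}$ into the localized integral and using $s\ge t/2$ gives a bound $cM_\ell(\bar t)t^{2\al-\ell}$, whence $M_\ell(\bar t)\le c+cM_\ell(\bar t)\bar t^{\al}+(\text{known lower-order terms})$; choosing $\bar t$ so small that $c\bar t^\al<\tfrac12$ yields $M_\ell(\bar t)\le c$, i.e.\ $\|u^{(\ell)}(t)\|_{L^\infty(\Omega)}\le ct^{\al-\ell}$ on $(0,\bar t]$. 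Finitely many shifts of the base point then propagate the estimate to all of $(0,T]$, where away from $t=0$ no singularity is present; the same midpoint-splitting also justifies existence of the derivatives, since on each half the integrand is smooth enough to differentiate under the integral sign.
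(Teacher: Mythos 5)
Your proposal is correct in substance and reaches the same estimates, but it closes the key step by a genuinely different device than the paper. Both arguments agree up to the point where one must bound $\|u^{(\ell)}(t)\|_{L^\infty(\Omega)}$ for $\ell=2,3$ by a self-referential weakly singular integral: the real obstacle, as you correctly identify, is that $t^{\alpha-\ell}$ is not integrable at $t=0$, so no naive Gr\"onwall applies. The paper's remedy is to pre-multiply by the weight $t^{2-\alpha}$ \emph{before} differentiating, distribute the weight across the convolution via $t=(t-s)+s$ (their terms $I_2$, $I_3$ in \eqref{Eq:tuprime}), arrive at
$t^{2-\alpha}\|u''(t)\|_{L^\infty(\Omega)} \le c + ct^{1-\alpha}\int_0^t(t-s)^{\alpha-1}s\|u''(s)\|_{L^\infty(\Omega)}\,\d s$,
and then absorb the convolution globally on $[0,T]$ with the exponential weight $e^{-\sigma t}$ and the beta-function estimate \eqref{eqn:gr-02}, choosing $\sigma$ large. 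Your remedy instead splits the convolution at $t/2$ so that derivatives fall either on the kernel (whose argument is bounded below by $t/2$) or on $\phi$ (evaluated away from $s=0$), localizes the self-referential contribution to $[t/2,t]$, and absorbs it via the weighted supremum $M_\ell(\bar t)$ on a short interval followed by continuation. The paper's route buys a one-shot global bound on $[0,T]$ with no continuation step; yours is more elementary (no exponential-weight computation) at the cost of the piecewise propagation, which is workable but stated tersely — on each subsequent interval $[j\bar t,(j+1)\bar t]$ the part of $\int_{t/2}^{t}$ lying below $j\bar t$ is controlled by the previously established bound and the remainder is absorbed by the same smallness of $c\,\bar t^{\alpha}$. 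Both proofs share the implicit assumption that the supremum being absorbed ($\max_t e^{-\sigma t}t^{2-\alpha}\|u''(t)\|$ in the paper, $M_\ell(\bar t)$ in yours) is finite a priori, which would be justified by a regularization or fixed-point argument; this is not a defect specific to your version. Your preliminary reduction to $u(t)-u_0=F(t)h_0+\int_0^t E(t-s)\phi(s)\,\d s$ with $\phi(0)=0$, the $\ell=1$ Volterra inequality, and the accounting of the Fa\`a di Bruno cross terms (all strictly less singular than $t^{\alpha-\ell}$) are all correct and consistent with the paper.
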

\begin{proof}
The H\"older continuity $u\in C^{\alpha}([0,T];C(\bar\Omega))$ and the estimate \eqref{Eq:inf} with $\ell=1$ are direct results of the solution representation \eqref{eqn:sol-rep},
the estimate \eqref{eqn:smoothing}, and the Banach fixed point theorem. The argument is identical
to the proof of \cite[Theorem 3.1]{JinLiZhou:nonlinear}, and hence omitted here. 

Now we turn to the estimate \eqref{Eq:inf} with $\ell=2$, which requires more discussion.
First, we take derivative on the solution representation \eqref{eqn:sol-rep} and obtain
\begin{equation}\label{eqn:der1}
\begin{aligned}
u'(t)&= \frac{d}{dt} F(t) \Delta u_0 + \frac{d}{dt} \int_0^t E(s) f(u(t-s))\,\d s \\
&= E(t) [\Delta u_0 + f(u_0)] +  \int_0^t E(s) f'(u(t-s)) u'(t-s)\,\d s, \\
\end{aligned}
\end{equation}
where we use the fact that $F'(t) = E(t)$. Here we note that both $E(t)$ and $u'(t)$ are weakly singular near $t=0$.
Therefore, we multiply $t^{2-\alpha}$ on \eqref{eqn:der1} to compensate for the singularity before differentiation.
Then
\begin{equation}\label{Eq:tuprime}
\begin{aligned}
  t^{2-\al}u'(t)&=t^{2-\al}E(t)[\Delta u_0+f(u_0)]+t^{2-\al} \int_0^t E(s) f'(u(t-s)) u'(t-s)\,\d s \\
  &=t^{2-\al}E(t)(\Delta u_0+f(u_0))+t^{1-\al}\int_{0}^{t} (t-s)E(t-s)f'(u(s))u'(s)\d s\\
  &\quad +t^{1-\al}\int_{0}^{t} E(s)f'(u(t-s))(t-s)u'(t-s)\d s\\
  & =: \sum_{i=1}^3 I_i(t).
\end{aligned}\end{equation}
After taking derivative of the first term $I_1$,
we apply the  estimate \eqref{eqn:smoothing} to obtain that
\begin{equation*}\label{Eq:I1}
\begin{aligned}
\|\partial_tI_1(t)\|_{L^\infty(\Omega)}&=\Big\|\Big((2-\al)t^{1-\al}E(t)+t^{2-\al}E'(t)\Big)[\Delta u_0+f(u_0)]\Big\|_{L^\infty(\Omega)}\\
&\leq ct^{1-\al}\|E(t)[\Delta u_0+f(u_0)]\|_{L^\infty(\Omega)}+t^{2-\al}\|E'(t)[\Delta u_0+f(u_0)]\|_{L^\infty(\Omega)}\\
&\le c\| \Delta u_0+f(u_0) \|_{L^\infty(\Omega)},
\end{aligned}
\end{equation*}
where we use the fact that $\Delta u_0+f(u_0) \in C(\bar\Omega)$.
The derivative of the second term $I_2$ in \eqref{Eq:tuprime} can be estimate analogously. Using the estimate \eqref{eqn:smoothing}, we have
$$ \lim_{t\rightarrow 0} \| t E(t) \|_{ C(\bar \Omega) \rightarrow C(\bar \Omega)} = 0, $$
which together with the triangle's inequality and \eqref{Eq:inf} for $\ell=1$ implies that
\begin{equation*}
\begin{aligned}
\|\partial_tI_2(t)\|_{L^\infty(\Omega)}&\le c t^{-\alpha}\int_{0}^{t} (t-s) \|E(t-s)f'(u(s))u'(s)\|_{L^\infty(\Omega)} \d s\\
&\quad + ct^{1-\alpha} \int_{0}^{t} \|E(t-s)f'(u(s))u'(s)\|_{L^\infty(\Omega)} \d s \\
&\quad + ct^{1-\alpha} \int_{0}^{t} (t-s) \|E'(t-s)f'(u(s))u'(s)\|_{L^\infty(\Omega)} \d s \\
&\le c t^{-\alpha}\int_{0}^{t} (t-s)^\alpha s^{\alpha-1} \d s
+ ct^{1-\alpha} \int_{0}^{t}(t-s)^{\alpha-1} s^{\alpha-1} \d s \le c_T.
\end{aligned}
\end{equation*}
Similarly, the derivative of {the} third term can be bounded by
\begin{equation*}
\begin{aligned}
\|\partial_tI_3(t)\|_{L^\infty(\Omega)}&\le ct^{-\al}\int_{0}^{t} s\|E(t-s)f'(u(s))u'(s)\|_{L^\infty(\Omega)} \d s\\
&\quad + ct^{1-\alpha} \int_{0}^{t}  \|E(t-s)[f'(u(s))u'(s)+sf''(u(s))(u'(s))^2]\|_{L^\infty(\Omega)} \d s \\
&\quad + ct^{1-\alpha} \int_{0}^{t}  s \|E(t-s) f'(u(s))u''(s)\|_{L^\infty(\Omega)} \d s \\
&\le c_T +  ct^{1-\alpha} \int_{0}^{t}  (t-s)^{\alpha-1}s \| u''(s)\|_{L^\infty(\Omega)} \d s.
\end{aligned}
\end{equation*}
As a result, we achieve at
$$\|\partial_t[t^{2-\alpha}u'(t)]\|_{L^\infty(\Omega)} \le c + ct^{1-\alpha} \int_{0}^{t}  (t-s)^{\alpha-1}s \| u''(s)\|_{L^\infty(\Omega)} \d s. $$
Then we apply  \eqref{Eq:inf} for $\ell=1$ again and use the triangle inequality to obtain that
\begin{equation}\label{eqn:est1}
\begin{aligned}
t^{2-\alpha}\|u''(t)\|_{L^\infty(\Omega)} &\le c + c t^{1-\alpha} \int_{0}^{t}  (t-s)^{\alpha-1}s \| u''(s)\|_{L^\infty(\Omega)} \d s.
\end{aligned}
\end{equation}
In order to derive a uniform bound of $t^{2-\alpha}\|u''(t)\|_{L^\infty(\Omega)}$, we
multiply $e^{-\sigma t}$ on the inequality \eqref{eqn:est1} for some parameter $\sigma>0$ to be determined,
and obtain that
\begin{equation}\label{eqn:gr-01}
\begin{aligned}
&\quad e^{-\sigma t}t^{2-\alpha}\|u''(t)\|_{L^\infty(\Omega)}\\
&\le  ce^{-\sigma t} + ce^{-\sigma t} \int_{0}^{t}  t^{1-\alpha} (t-s)^{\alpha-1}s \| u''(s)\|_{L^\infty(\Omega)} \d s\\
&\le ce^{-\sigma t}   + c \Big[\max_{t\in[0,T]}e^{-\sigma t}t^{2-\alpha}\| u''(t)\|_{L^\infty(\Omega)} \Big] \int_{0}^{t} t^{1-\alpha} (t-s)^{\alpha-1}e^{-\sigma(t-s)} s^{\alpha-1} \d s\\
&\le  ce^{-\sigma t}   + c \big(T/\sigma\big)^{\frac\alpha2}\max_{t\in[0,T]}e^{-\sigma t}t^{2-\alpha}\| u''(t)\|_{L^\infty(\Omega)},
\end{aligned}
\end{equation}
where we use the estimate that
\begin{equation}\label{eqn:gr-02}
\begin{aligned}
\int_{0}^{t} t^{1-\alpha} (t-s)^{\alpha-1}e^{-\sigma (t-s)} s^{\alpha-1} \d s
&= t^{\alpha}\int_{0}^{1} e^{-\sigma t s} s^{\alpha-1} (1-s)^{\alpha-1} \d s\\
&= \big(t/\sigma\big)^{\frac\alpha2} \int_{0}^{1} [e^{-\sigma t s}(\sigma ts)^{\frac\alpha2}] s^{\frac\alpha2-1} (1-s)^{\alpha-1} \d s\\
&\le  c \big(t/\sigma\big)^{\frac\alpha2}\int_{0}^{1} s^{\frac\alpha2-1} (1-s)^{\alpha-1} \d s \le c \big(T/\sigma\big)^{\frac\alpha2}.
\end{aligned}
\end{equation}
Finally, by choosing a sufficient large $\lambda$ such that $2c \big(T/\sigma\big)^{\frac\alpha2}< 1$, we obtain that
\begin{equation}\label{eqn:gr-03}
\max_{s\in[0,T]} e^{-\lambda s}s^{2-\al}\|u''(s)\|_{L^\infty(\Omega)}\leq c,
\end{equation}
which confirms the assertion \eqref{Eq:inf} with  $\ell=2$.

Now we turn to the case $\ell=3$ and give a brief proof. The basic idea of this argument is identical to that of $\ell=2$.
With the definition of $I_i$  in \eqref{Eq:tuprime} and  the estimate \eqref{eqn:smoothing} of the solution operator $E(t)$, we have the bound that
\begin{equation*}\label{Eq:I1-2}
\begin{aligned}
\|\partial_{tt}I_1(t)\|_{L^\infty(\Omega)}
&\le c \sum_{k=0}^2 t^{k-\al}\Big\|\frac{\d^k}{\d t^k}E(t)[\Delta u_0+f(u_0)]\Big\|_{L^\infty(\Omega)}
\le c t^{-1}.
\end{aligned}
\end{equation*}
For the second term, we use the splitting
\begin{equation*} I_2 = t^{-\alpha}\int_0^t (t-s)^2 E(t-s)f'(u(s))u'(s) \, \d s + t^{-\alpha}\int_0^t (t-s) E(t-s) s f'(u(s))u'(s)\, \d s,\end{equation*}
and the fact that
\begin{equation*}
\lim_{t\rightarrow 0} \| t E(t) \|_{ C(\bar \Omega) \rightarrow C(\bar \Omega)} + \| tu'(t) \|_{L^\infty(\Omega)} = 0,
\end{equation*}
and hence derive that
\begin{equation*}
\begin{aligned}
\|\partial_{tt}I_2(t)\|_{L^\infty(\Omega)}&\le c \sum_{k=0}^2 t^{-(2-k)-\al} \int_0^t \sum_{m=0}^k (t-s)^{2+m-k} \big|\big|\frac{\d^{m}}{\d t^{m}}E(t-s)[f'(u(s))u'(s)]\big|\big|_{L^\infty(\Omega)} \,\d s\\
 &\quad + c \sum_{k=0}^1 t^{-(2-k)-\al} \int_0^t   \big|\big|\frac{\d^{k}}{\d t^{k}}[(t-s)E(t-s)] [s(f'(u(s))u'(s)]\big|\big|_{L^\infty(\Omega)} \,\d s\\
 &\quad + c t^{-2-\al} \int_0^t   \big|\big|\frac{\d}{\d t}[(t-s)E(t-s)] [\partial_s(s(f'(u(s))u'(s))]\big|\big|_{L^\infty(\Omega)} \,\d s.
\end{aligned}
\end{equation*}
Then the estimates \eqref{eqn:smoothing} and  \eqref{Eq:inf} with  $\ell=1,2$ imply that
\begin{equation*}\label{Eq:I2-2}
\begin{aligned}
\|\partial_{tt}I_2(t)\|_{L^\infty(\Omega)}&\le c \sum_{k=0}^2 t^{-(2-k)-\al} \int_0^t (t-s)^{\alpha-k+1}s^{\alpha-1}\,\d s + c \sum_{k=0}^1 t^{-(2-k)-\al} \int_0^t  (t-s)^{\alpha-k} s^\alpha \,\d s\\
 &\quad + c t^{-2-\alpha}\int_0^t  (t-s)^{\alpha-1} (s^{\alpha-1} +s^{2\alpha-1})\,\d s
\le c t^{\alpha-1} .
\end{aligned}
\end{equation*}
The same argument also works for the third term $I_3$ in \eqref{Eq:tuprime}:
\begin{equation*}
\begin{aligned}
\|\partial_{tt}I_3(t)\|_{L^\infty(\Omega)}&\le c \sum_{k=0}^2 t^{-(2-k)-\al} \int_0^t \sum_{m=0}^k s^{2+m-k} \big|\big|E(t-s)[\partial_s^m(f'(u(s))u'(s))]\big|\big|_{L^\infty(\Omega)} \,\d s\\
 &\quad + c \sum_{k=0}^1 t^{-(2-k)-\al} \int_0^t   \big|\big| (t-s)E(t-s)  \partial_s^{m}[s(f'(u(s))u'(s)]\big|\big|_{L^\infty(\Omega)} \,\d s\\
 &\quad + c t^{-2-\al} \int_0^t   \big|\big|\frac{d}{dt}[(t-s)E(t-s)] [\partial_s(s(f'(u(s))u'(s))]\big|\big|_{L^\infty(\Omega)} \,\d s\\
 &\le  c t^{\alpha-1} + t^{-\alpha}\int_0^t (t-s)^{\alpha-1}s^2 \| u^{(3)}(s) \|_{L^\infty(\Omega)}  \,\d s.
 \end{aligned}
\end{equation*}
As a result, we conclude that
$$\|\partial_{tt}[t^{2-\alpha}u'(t)]\|_{L^\infty(\Omega)} \le c t^{-1} + ct^{-\alpha} \int_{0}^{t}  (t-s)^{\alpha-1}s^2 \| u''(s)\|_{L^\infty(\Omega)} \d s. $$
Then we apply the estimate \eqref{Eq:inf} for $\ell=1,2$ and obtain that
$$t^{3-\alpha}\| \partial_t^3 u (t)]\|_{L^\infty(\Omega)} \le c + ct^{1-\alpha} \int_{0}^{t}  (t-s)^{\alpha-1}s^2 \| u''(s)\|_{L^\infty(\Omega)} \d s. $$
Finally, the arguments in \eqref{eqn:gr-01}-\eqref{eqn:gr-03} yield the desired assertion \eqref{Eq:inf} for $\ell=3$.
\end{proof}\vskip5pt

\begin{remark}\label{rem:CD}
Under the condition that $u_0\in D$, it is not always valid that $(I + F(t)\Delta) u_0 \in C([0,T];D)$.
This is because $\Delta (I + F(t)\Delta) u_0$ is compatible with the homogeneous Dirichlet boundary condition for all $t>0$, while
the initial condition $\Delta u_0\in C(\bar \Omega)$. As a result, $\Delta (I + F(t)\Delta) u_0$ is not continuous to the initial condition with
 $L^\infty(\Omega)$ norm. 
However, if
$$u_0\in D = \{u,\Delta u\in C(\bar\Omega)~\text{and}~u=\Delta u=0~\text{on}~\partial\Omega \},$$
then we can conclude that $(I + F(t)\Delta)u_0\in C([0,T];D)$.
\end{remark}\vskip5pt

\subsection{Regularity of remainder $R(u(t);u_0)$}

Recall the expansion of the nonlinear term in \eqref{eqn:taylor}. The regularity of the
remainder part $R(u;u_0)$ plays an important role in the error analysis.
This motivates us to derive  regularity results of $R(u;u_0)$ in the Bochner-Sobolev spaces. For any $s\ge 0$ and
$1\le p < \infty$, we denote by $W^{s,p}(0,T;B)$ the space of functions $v:(0,T)\rightarrow B$,
with the norm defined by interpolation, where $B$ denotes a Banach space. Equivalently, the space is equipped with the quotient norm
\begin{equation*}\label{quotient-norm}
\|v\|_{W^{s,p}(0,T;B)}:=
\inf_{\widetilde v}\|\widetilde v\|_{W^{s,p}({\mathbb R};B)} ,
\end{equation*}
where the infimum is taken over all possible extensions $\widetilde v$ that extend $v$ from $(0,T)$ to ${\mathbb R}$.
For any $0<s< 1$, the Sobolev--Slobodeckij seminorm $|\cdot|_{W^{s,p}(0,T;B)}$ is defined by
\begin{equation}\label{eqn:SS-seminorm}
   | v  |_{W^{s,p}(0,T;B)}^p := \int_0^T\hskip-5pt\int_0^T \frac{\|v(t)-v(\xi)\|_{B}^p}{|t-\xi|^{1+ps}} \,\d t \, \d\xi ,
\end{equation}
and the full norm $\|\cdot\|_{W^{k+s,p}(0,T;B)}$, with $k\ge 0$ and $k\in \mathbb{N}$, is defined by
\begin{equation*}
\|v\|_{W^{k+s,p}(0,T;B)}^p = \sum_{m=0}^k\|\partial_t^m v \|_{L^p(0,T;B)}^p+|\partial_t^k v |_{W^{s,p}(0,T;B)}^p .
\end{equation*}
%
%
Then the regularity of $R(u;u_0)$ is shown in the following theorem.

\begin{theorem}\label{thm:reg-R}
Suppose that the assumptions in Theorem \ref{thm:reg-u} hold. Then the remainder part $R(u;u_0)$, which is defined by \eqref{eqn:taylor},
has the regularity
\begin{equation*}
  R(u(t);u_0)\in W^{1+2\al-\epsilon,1}(0,T;C(\bar\Omega))\cap C^3((0,T];C(\bar\Omega))
\end{equation*}
for any arbitrary small $\epsilon>0$.
\end{theorem}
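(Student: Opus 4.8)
The plan is to split the assertion into its two components and treat them separately. The membership $R(u(\cdot);u_0)\in C^3((0,T];C(\bar\Omega))$ is the easier half: since $f\in C^3(\mathbb{R})$ and, by Theorem \ref{thm:reg-u}, $u\in C^3((0,T];C(\bar\Omega))$, the chain rule shows that $f(u(t))$ is thrice continuously differentiable on $(0,T]$ with values in $C(\bar\Omega)$; the remaining terms $f(u_0)$ and $f'(u_0)(u(t)-u_0)$ are affine in $u$ and hence trivially $C^3$. Subtracting gives $R(u(\cdot);u_0)\in C^3((0,T];C(\bar\Omega))$.

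The substance of the theorem lies in the Bochner--Sobolev regularity, and the guiding principle is that the Taylor correction in \eqref{eqn:taylor} removes the leading-order behaviour of $f(u)$ near $t=0$, so that $R=O((u-u_0)^2)$ and each time derivative gains an extra factor $t^{\alpha}$ relative to the corresponding derivative of $f(u)$. First I would record the pointwise decay estimates
\begin{equation*}
\|\partial_t^j R(u(t);u_0)\|_{L^\infty(\Omega)}\le c\,t^{2\alpha-j},\qquad j=0,1,2,3.
\end{equation*}
These follow by differentiating the identity $\partial_t R=(f'(u)-f'(u_0))\,u'$, expanding by the product and chain rules, and inserting the a priori bounds $\|u(t)-u_0\|_{L^\infty(\Omega)}\le c\,t^{\alpha}$ (H\"older continuity) and $\|\partial_t^\ell u(t)\|_{L^\infty(\Omega)}\le c\,t^{\alpha-\ell}$ from Theorem \ref{thm:reg-u}, together with the boundedness of $f',f'',f'''$. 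The crucial point is that every surviving summand carries either the factor $f'(u)-f'(u_0)=O(t^{\alpha})$ or a product of at least two factors $u'=O(t^{\alpha-1})$, so the worst exponent is $2\alpha-j$ rather than $\alpha-j$.

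With these decay rates in hand, I would write $1+2\alpha-\epsilon=m+s$ with integer $m=\lceil 2\alpha\rceil\in\{1,2\}$ and $s=1+2\alpha-\epsilon-m\in(0,1)$, and verify the two ingredients of the $W^{m+s,1}$ norm. The integer-order part is immediate, since $\|\partial_t^j R\|_{L^1(0,T;C(\bar\Omega))}\le c\int_0^T t^{2\alpha-j}\,\d t<\infty$ for $0\le j\le m$, as $2\alpha-j>-1$ whenever $j<1+2\alpha$. The core estimate is the Sobolev--Slobodeckij seminorm of $h:=\partial_t^m R$, which by the above satisfies $\|h(t)\|_{L^\infty(\Omega)}\le c\,t^{\beta}$ and $\|h'(t)\|_{L^\infty(\Omega)}\le c\,t^{\beta-1}$ with $\beta:=2\alpha-m\in(-1,0]$. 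For $0<\xi<t$ I would interpolate between the two bounds $\|h(t)-h(\xi)\|_{L^\infty(\Omega)}\le c\,\xi^{\beta}$ (from the pointwise estimate, using $\beta\le 0$) and $\|h(t)-h(\xi)\|_{L^\infty(\Omega)}\le c\,\xi^{\beta-1}(t-\xi)$ (from integrating $h'$), splitting the inner integral in \eqref{eqn:SS-seminorm} at $t=2\xi$. A direct computation then yields
\begin{equation*}
\int_\xi^T \|h(t)-h(\xi)\|_{L^\infty(\Omega)}\,(t-\xi)^{-1-s}\,\d t\le c\,\xi^{\beta-s},
\end{equation*}
and since $\beta-s=\epsilon-1>-1$ the outer integral $\int_0^T \xi^{\beta-s}\,\d\xi$ converges, giving $|h|_{W^{s,1}(0,T;C(\bar\Omega))}<\infty$.

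The main obstacle, and the only genuinely technical step, is this seminorm estimate: one must split the double integral into its near-diagonal and far-from-diagonal regions and use the appropriate bound in each, the whole construction being calibrated so that the exponent $s=1+2\alpha-\epsilon-m$ lands strictly below the critical threshold $\beta+1=1+2\alpha-m$ separating integrability from divergence. This is also where the hypothesis $f\in C^3$ and the third-order a priori estimate of Theorem \ref{thm:reg-u} are essential, since for $\alpha>\tfrac12$ one has $m=2$ and the seminorm of $\partial_t^2 R$ requires the bound on $\partial_t^3 R$, hence on $\partial_t^3 u$.
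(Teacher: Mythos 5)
Your proposal is correct and follows essentially the same route as the paper: both arguments reduce the claim to the pointwise decay bounds $\|\partial_t^j R\|_{L^\infty(\Omega)}\le c\,t^{2\alpha-j}$ obtained from Theorem \ref{thm:reg-u}, and then bound the Sobolev--Slobodeckij seminorm of $\partial_t^m R$ (with $m=1$ for $\alpha\le 1/2$ and $m=2$ for $\alpha>1/2$, exactly the paper's two cases) using the decay of $\partial_t^{m+1}R$. The only difference is cosmetic: the paper evaluates the resulting double integral by an explicit change of variables to a Beta-type integral, whereas you use a near-/far-diagonal splitting at $t=2\xi$; both land on the same integrability threshold $\epsilon-1>-1$.
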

\begin{proof}
By the definition of $R(u(t);u_0)$ and the integral form of the remainder in Taylor's expansion,
we may rewrite $R(u(t);u_0)$ as
\begin{equation*}\label{Eq:reRu}
  R(u(x,t);u_0)=\int_{u_0(x)}^{u(x,t)} (u(x,t)-\xi)f''(\xi) \, \d \xi.
\end{equation*}
It is easy to observe that
\begin{equation*}
\begin{aligned}
  \partial_t^3 R(u(x,t);u_0)& =  \partial_t^2 \Big(u'(x,t)\int_{u_0(x)}^{u(x,t)}  f''(\xi)\, \d \xi\Big)\\
  & = u'''(x,t) \int_{u_0(x)}^{u(x,t)} f''(\xi)\, \d\xi +3u'(x,t)u''(x,t)f''(u(x,t))\\
  &\quad +(u'(x,t))^3f'''(u(x,t)).
\end{aligned}
\end{equation*}
Then using the facts that $f$ is smooth and $u\in C^3((0,T];C(\bar\Omega))$ by Theorem \ref{thm:reg-u}, we conclude that
$ R(u;u_0) \in C^3((0,T];C(\bar\Omega))$. Therefore, it suffices to show
that $ R(u(t);u_0)\in W^{1+2\al-\epsilon,1}(0,T;C(\bar\Omega))$.
To this end, we shall confirm this claim by investigating the following two cases.

\textbf{Case 1. $\alpha\in(0,1/2)$.} Obviously,
we have $R(u ;u_0)\in C([0,T]\times\bar \Omega)$. Define
\begin{equation*}\label{Eq:DtRu}
  w(x,t)=\partial_t R(u(x,t);u_0)=\int_{u_0(x)}^{u(x,t)} u'(x,t)f''(\xi)\, \d \xi.
\end{equation*}
Then we observe that
\begin{equation*}
\begin{aligned}
\|w(t)\|_{L^\infty(\Omega)}
 &\leq c\|u(t)-u_0\|_{L^\infty(\Omega)}\|u'(t)\|_{L^\infty(\Omega)} \max_{t\in[0,T]}\|f''(u(t))\|_{L^\infty(\Omega)}
 \leq c t^{2\al-1},
\end{aligned}
\end{equation*}
where the last inequality follows from the fact that $u\in C^{\alpha}([0,T];C(\bar \Omega))$ and $\|u'(t)\|_{L^\infty(\Omega)}\le ct^{\alpha-1}$,
by Theorem \ref{thm:reg-u}.
Then the similar argument also yields that
\begin{equation}\label{eqn:wt}
\begin{aligned}
   \|w'(t)\|_{L^\infty(\Omega)}
\leq& c\Big(\|u''(t)\|_{L^\infty(\Omega)} \| u(t)-u_0 \|_{L^\infty(\Omega)}
  + \|u'(t)\|_{L^\infty(\Omega)}^2  \Big)  \max_{t\in[0,T]} \| f''(u(t)) \|_{L^\infty(\Omega)}\\
  \leq& ct^{2\alpha-2}.
\end{aligned}
\end{equation}
Then according to the  Sobolev--Slobodeckij seminorm \eqref{eqn:SS-seminorm},
we have for any $\epsilon\in(0,2\alpha)$
\begin{equation*}
\begin{aligned}
|  w |_{W^{2\alpha-\epsilon,1}(0,T;L^\infty(\Omega))} &= \int_{0}^{T}\hskip-5pt\int_{0}^{T} \frac{\|w(t)-w(s)\|_{L^\infty(\Omega)}}{|t-s|^{2\al+1-\epsilon}} \d t\d s=
\int_{0}^{T}\hskip-5pt\int_{0}^{T} \frac{\|\int_{s}^{t} w'(y)\d y\|_{L^\infty(\Omega)} }{|t-s|^{2\al+1-\epsilon}} \d t\d s\\
&\leq  \int_{0}^{T}\hskip-5pt\int_{0}^{T} \frac{|\int_{s}^{t} \| w'(y)\|_{L^\infty(\Omega)} \d y|}{|t-s|^{2\al+1-\epsilon}} \d t\d s.
\end{aligned}
\end{equation*}
Now by applying the estimate \eqref{eqn:wt}, we arrive at
\begin{equation*}
\begin{aligned}
|  w |_{W^{2\alpha-\epsilon,1}(0,T;L^\infty(\Omega))} & \leq c\int_{0}^{T}\hskip-5pt\int_{0}^{T} \frac{|\int_{s}^{t} y^{2\al-2}\d y|}{|t-s|^{2\al+1-\epsilon}} \d t\d s
 =c\int_{0}^{1}\hskip-5pt\int_{0}^{1} \frac{|\xi^{2\al-1}- \zeta^{2\al-1}|}{|\xi-\zeta|^{2\al+1-\epsilon}} \d \xi\d \zeta \\
& =c\bigg(\int_{0}^{1}\hskip-5pt\int_{0}^{\xi} \frac{\zeta^{2\al-1}-\xi^{2\al-1}}{(\xi-\zeta)^{2\al+1-\epsilon}} \d \zeta\d \xi
 +\int_{0}^{1}\hskip-5pt\int_{\xi}^{1} \frac{\xi^{2\al-1}-\zeta^{2\al-1}}{(\zeta-\xi)^{2\al+1-\epsilon}} \d \zeta\d \xi\bigg)\\
 &=2c\int_{0}^{1}\int_{0}^{\xi} \frac{\zeta^{2\al-1}-\xi^{2\al-1}}{(\xi-\zeta)^{2\al+1-\epsilon}} \d \zeta\d \xi
 =2c\int_{0}^{1}\xi^{-1+\epsilon}\d \xi \int_{0}^{1} \frac{ t^{2\al-1}-1}{(1-t)^{2\al+1-\epsilon}} \d t\\
&\le c_\epsilon \int_{0}^{1} \frac{ t^{2\al-1} -1 }{(1-t)^{2\al+1-\epsilon}} \d t.
\end{aligned}
\end{equation*}
Then the assertion that $w\in W^{1+2\al-\epsilon,1}(0,T;L^\infty(\Omega))$ follows from the  observation that
\begin{equation*}
\begin{aligned}
 \int_{0}^{1} \frac{ t^{2\al-1} -1 }{(1-t)^{2\al+1-\epsilon}} \d t
 &\le  \Big( \int_{0}^{\frac12}+  \int_{\frac12}^{1} \Big) \frac{ t^{2\al-1} -1 }{(1-t)^{2\al+1-\epsilon}} \d t \\
 &\le c + c \lim_{t\rightarrow 1} \frac{t^{2\al-1} -1 }{(1-t)^{2\al-\epsilon}} + c\int_{\frac12}^{1} \frac{ t^{2\al-2} }{(1-t)^{2\al-\epsilon}} \d t \le c.
\end{aligned}
\end{equation*}
\textbf{Case 2. $\alpha\in (1/2,1)$.}  In this case, using the estimate \eqref{eqn:wt}, it is easy to see that
$ \|\partial_t w (t)\|_{L^\infty(\Omega)} \in L^1(0,T)$. Then our aim is to show that
$$\partial_{tt}w \in {W^{2\alpha-\epsilon-1,1}(0,T;L^\infty(\Omega))}.$$
Using the expression for $\partial_{tt}w$ that
\begin{equation*}
 \partial_{tt} w (x,t)=(u'(x,t))^3 f'''(u(x,t))+2u'(x,t)u''(x,t)f''(u(x,t))+u'''(x,t) \int_{u_0(x)}^{u(x,t)}f''(\xi)\d \xi,
\end{equation*}
and Theorem \ref{thm:reg-u}, we derive that
\begin{equation}\label{eqn:wtt}
\begin{aligned}
  \|w_{tt}\|_{L^\infty(\Omega)}&\leq c \|(u'(t))\|_{L^\infty(\Omega)}^3
  +c\| u'(t)\|_{L^\infty(\Omega)}\|u''(t)\|_{L^\infty(\Omega)} \\
  &\quad +c\|u'''(t)\|_{L^\infty(\Omega)} \|u(t)-u_0\|_{L^\infty(\Omega)}\\
  &\leq c(t^{3\al-3}+t^{2\alpha-3}) \le ct^{2\alpha-3}.
\end{aligned}
\end{equation}
Recalling  the Sobolev--Slobodeckij seminorm  \eqref{eqn:SS-seminorm}, we have for any $\epsilon\in(0,2\alpha-1)$
\begin{equation*}
\begin{aligned}
\|  w_t \|_{W^{2\alpha-1-\epsilon,1}(0,T;L^\infty(\Omega))}
&= \int_{0}^{T}\hskip-5pt\int_{0}^{T} \frac{\|w_t(t)-w_s(s)\|_{L^\infty(\Omega)}}{|t-s|^{2\al-\epsilon}} \d t\d s\\
&= \int_{0}^{T}\hskip-5pt\int_{0}^{T} \frac{|\int_{s}^{t} \| \partial_{yy}w(y)\|_{L^\infty(\Omega)}\d y | }{|t-s|^{2\al-\epsilon}} \d t\d s.
\end{aligned}
\end{equation*}
Then we apply the estimate \eqref{eqn:wtt} and derive that
\begin{equation*}
\begin{aligned}
\|  w_t \|_{W^{2\alpha-1-\epsilon,1}(0,T;L^\infty(\Omega))}
&\leq c\int_{0}^{T}\hskip-5pt\int_{0}^{T} \frac{|\int_{s}^{t} y^{2\al-3}\d y|}{|t-s|^{2\al-\epsilon}} \d t\d s
 =c\int_{0}^{1}\hskip-5pt\int_{0}^{1} \frac{|\xi^{2\al-2}- \zeta^{2\al-2}|}{|\xi-\zeta|^{2\al -\epsilon}} \d \xi\d \zeta \\
& =c\bigg(\int_{0}^{1}\hskip-5pt\int_{0}^{\xi} \frac{\zeta^{2\al-2}-\xi^{2\al-2}}{(\xi-\zeta)^{2\al -\epsilon}} \d \zeta\d \xi
 +\int_{0}^{1}\hskip-5pt\int_{\xi}^{1} \frac{\xi^{2\al-2}-\zeta^{2\al-2}}{(\zeta-\xi)^{2\al -\epsilon}} \d \zeta\d \xi\bigg)\\
 &=2c\int_{0}^{1}\int_{0}^{\xi} \frac{\zeta^{2\al-2}-\xi^{2\al-2}}{(\xi-\zeta)^{2\al -\epsilon}} \d \zeta\d \xi
 =2c\int_{0}^{1}\xi^{-1+\epsilon}\d \xi \int_{0}^{1} \frac{ t^{2\al-2}-1}{(1-t)^{2\al -\epsilon}} \d t\\
 &\le c,
\end{aligned}
\end{equation*}
where the last inequality is a direct consequence of the fact that
\begin{equation*}
\begin{aligned}
 \int_{0}^{1} \frac{ t^{2\al-2} -1 }{(1-t)^{2\al -\epsilon}} \d t
 &\le  \Big( \int_{0}^{\frac12}+  \int_{\frac12}^{1} \Big) \frac{ t^{2\al-2} -1 }{(1-t)^{2\al -\epsilon}} \d t \\
 &\le c + c \lim_{t\rightarrow 1} \frac{t^{2\al-2} -1 }{(1-t)^{2\al+1-\epsilon}} + c\int_{\frac12}^{1} \frac{ t^{2\al-3} }{(1-t)^{2\al+1-\epsilon}} \d t \le c.
\end{aligned}
\end{equation*}
Therefore, we obtain that $\partial_{tt}w \in {W^{2\alpha-\epsilon-1,1}(0,T;L^\infty(\Omega))}$, and thus $u\in {W^{2\alpha+1-\epsilon,1}(0,T;L^\infty(\Omega))}$
for any $\alpha\in(1/2,1)$.

In conclusion, Case 1 and 2 together confirm the desired assertion for $\alpha\in(0,1/2)\cup(1/2,1)$. The critical case  $\alpha=1/2$ follows directly
from the result of Case 1 and hence the proof is completed.
\end{proof}

\section{Error analysis of modified BDF schemes}\label{sec:error}
The aim of this section is to present a complete error analysis for the high-order time stepping scheme \eqref{eqn:BDF-CQ-m}.
To begin with, we assume that the nonlinear term is globally Lipschitz continuous, i.e., there exists a constant $c_L$ such that
\begin{equation}\label{eqn:GL}
| f (s) -  f (t)| \le c_L |t-s| \qquad \text{for all}~~t,s\in\mathbb{R}.
\end{equation}
We shall establish numerical analysis under the assumption \eqref{eqn:GL}, and then extend the argument to the case without
that assumption.

\subsection{Existence and uniqueness of the time stepping solution}
In the analysis stated in the next subsection, we will always assume that the fully implicit scheme \eqref{eqn:BDF-CQ-m}
admits a unique solution. It is easy to confirm this assumption, provided that \eqref{eqn:GL} is valid.

In each time level, the fully implicit scheme \eqref{eqn:BDF-CQ-m}
 requires to solve a nonlinear elliptic problem
\begin{equation}\label{eqn:nonlinear}
  (b_0I + \tau^\alpha \Delta) v = w + \tau^\alpha   f(v)
\end{equation}
with  homogeneous Dirichlet boundary condition and some function $w\in C(\bar\Omega)$.
Next we show that there exists a unique solution to \eqref{eqn:nonlinear} in $C(\bar \Omega)$.
{By defining} the operator $M: C(\bar \Omega)\rightarrow C(\bar\Omega)$ as
$$M v = (b_0I + \tau^\alpha \Delta)^{-1} (w+\tau^\alpha f(v) ),$$
{and} applying the resolvent estimate \eqref{eqn:resol}, we observe that for any $v_1, v_2\in C(\bar\Omega)$
$$ \| M v_1 - M v_2\|_{L^\infty(\Omega)} =  \| (b_0\tau^{-\alpha}I + \Delta)^{-1}  (  f(v_1) -  f(v_2))  \|_{L^\infty(\Omega)} \le c c_L\tau^\alpha \| v_1-v_2 \|_{L^\infty(\Omega)}.$$
Then for $\tau$ small enough, $M$ is a contraction mapping, and hence there exists a unique $v\in C(\bar\Omega)$ such that $M(v)=v$, i.e.,
the nonlinear elliptic problem  \eqref{eqn:nonlinear} has a unique solution. As a result, we conclude that the fully implicit time stepping scheme
\eqref{eqn:BDF-CQ-m} admits a unique sequence of functions $\{ u_n \}_{n=1}^N$ via mathematical induction.



\subsection{Error analysis of the BDF scheme for linear problem}
The fundamental idea of error estimation is to apply the representation of the time stepping solution by contour integral in $\mathbb{C}$,
which has been extensively
used in existing studies \cite{LubichSloanThomee:1996, JinLazarovZhou:L1, jin2017analysis, JinLiZhou:correction, Yan:2018L1}.
We shall apply this technique to derive error estimates in $L^\infty(\Omega)$ norm. Note that the operator
$A$ defined in \eqref{eqn:A}
is self-adjoint, but not negative definite. However, the spectrum of $A$ has an upper bound,
since $f(u_0)\in L^\infty(\Omega)$. Now we define
\begin{equation}\label{eqn:la}
 \lambda= \max\big(1,\|f(u_0)\|_{L^\infty(\Omega)}\big),
\end{equation}
and observe that $L=\Delta + (f(u_0)-\lambda)I $ is self-adjoint and negative definite.
According to the resolvent estimate \eqref{eqn:resol} for $L$, we have the new resolvent estimate, for $v\in C(\bar\Omega)$
\begin{equation}\label{eqn:resol-2}
\begin{aligned}
  \|(z-A)^{-1}v\|_{ L^\infty(\Omega)}&=\|(z-\lambda -\big(\Delta + (f(u_0)-\lambda)I\big)v)^{-1}
  \|_{ L^\infty(\Omega)} \leq c_\phi|z-\lambda|^{-1} \|v \|_{ L^\infty(\Omega)},
  \end{aligned}
\end{equation}
for all
\begin{equation}\label{eqn:sig-la}
  z\in\Sigma_{\lambda,\phi}:=\{z\in\mathbb{C}\backslash\{\lambda\}
:|\arg (z-\lambda)|<\phi \}~~ \text{and}~~    \phi\in(\pi/2,\pi).
\end{equation}

To analyze the fully implicit BDF scheme, we shall start with the linear problem with a time-independent source term
\begin{equation} \label{eqn:v}
 \partial_t^\alpha v(t) - A v(t) = g_0 \quad\text{with}~~t\in(0,T],\quad \text{and} ~~ v(0)=u_0.
\end{equation}
Then the time stepping scheme reads
\begin{equation}\label{eqn:v-disc}
\left\{\begin{aligned}
&\bdalt (v-u_0)_n-Av_n=g_0+a_n^{(k)}(Au_0+g_0),\quad &1\leq n\leq k-1,\\
&\bdalt (v-u_0)_n-Av_n=g_0,\quad &k\leq n\leq N
\end{aligned}\right.
\end{equation}
with $v_0 = u_0$. The next lemma gives an estimate of the difference between $v(t_n)$ and $v_n$.
\begin{lemma}\label{lem:vv}
Let $v(t)$ and $v_n$ be the solutions of \eqref{eqn:v} and \eqref{eqn:v-disc}, respectively.
We assume that the conditions in Theorem \ref{thm:reg-u} hold true. Then there exists $\tau_0>0$,
such that for $\tau\le\tau_0$ the following error estimate holds
\begin{equation*}
\begin{aligned}
  \|v_n-v(t_n)\|_{L^\infty(\Omega)}
  & \leq c  \tau^k t_n^{\al-k}\|g_0 + Au_0\|_{L^\infty(\Omega)}
\end{aligned}
\end{equation*}
where the constant $c$ depends on $\alpha, k$ and $T$.
\end{lemma}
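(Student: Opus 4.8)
The plan is to represent both the continuous solution $v(t_n)$ and the discrete solution $v_n$ as contour integrals in $\mathbb{C}$ and then compare them term by term, following the generating-function strategy of \cite{JinLiZhou:correction} but carried out entirely in the $L^\infty(\Omega)$ setting via the resolvent bound \eqref{eqn:resol-2}. First I would set $\tilde w_n = v_n - u_0$ and $b = g_0 + Au_0$; inserting this into \eqref{eqn:v-disc} and using $a_n^{(k)}=0$ for $n\ge k$ turns the scheme into
\begin{equation*}
\bdalt \tilde w_n - A\tilde w_n = (1+a_n^{(k)})\, b, \qquad \tilde w_0 = 0 .
\end{equation*}
Multiplying by $\xi^n$, summing, and using that the generating function of $\bdalt$ is $\delta_\tau(\xi)^\al$, I obtain
\begin{equation*}
\tilde w(\xi) = (\delta_\tau(\xi)^\al - A)^{-1} b\,\mu(\xi), \qquad \mu(\xi) = \frac{\xi}{1-\xi} + \sum_{j=1}^{k-1} a_j^{(k)}\xi^j .
\end{equation*}
Cauchy's formula together with the substitution $\xi = e^{-z\tau}$ then gives
\begin{equation*}
\tilde w_n = \frac{\tau}{2\pi\i}\int_{\Gamma^\tau_{\theta,\delta}} e^{zt_n}\,\mu(e^{-z\tau})\,(\delta_\tau(e^{-z\tau})^\al - A)^{-1} b\,\d z ,
\end{equation*}
where $\Gamma^\tau_{\theta,\delta}$ is the part of $\Gamma_{\theta,\delta}$ with $|\Imag z|\le \pi/\tau$. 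On the continuous side $w = v - u_0$ solves $\dalt w - Aw = b$ with $w(0)=0$, so Laplace inversion yields
\begin{equation*}
w(t_n) = \frac{1}{2\pi\i}\int_{\Gamma_{\theta,\delta}} e^{zt_n}\, z^{-1}\,(z^\al - A)^{-1} b\,\d z .
\end{equation*}

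Next I would decompose the error $v_n - v(t_n) = \tilde w_n - w(t_n)$ into three contributions: the truncation error from replacing $\Gamma_{\theta,\delta}$ by $\Gamma^\tau_{\theta,\delta}$, which decays exponentially in $n$ through the factor $e^{zt_n}$ and is negligible; the operator-consistency error from $(\delta_\tau(e^{-z\tau})^\al - A)^{-1} - (z^\al-A)^{-1}$; and the source-consistency error from $\tau\mu(e^{-z\tau}) - z^{-1}$. The three ingredients I would invoke are the standard BDF$k$ consistency bounds (for $k\le 6$ the map $\delta_\tau(e^{-z\tau})$ carries $\Gamma^\tau_{\theta,\delta}$ into a sector where the resolvent applies, and $\delta_\tau(e^{-z\tau}) = z + O(\tau^k|z|^{k+1})$, so $\delta_\tau(e^{-z\tau})^\al - z^\al = O(\tau^k|z|^{k+\al})$); the cancellation property built into the choice of $a_j^{(k)}$ from \cite[Table 1]{JinLiZhou:correction}, namely $\tau\mu(e^{-z\tau}) - z^{-1} = O(\tau^k|z|^{k-1})$; and the $L^\infty(\Omega)$ resolvent estimate \eqref{eqn:resol-2}, which replaces selfadjoint functional calculus and gives $\|(z^\al-A)^{-1}\|_{C(\bar\Omega)\to C(\bar\Omega)} \le c|z|^{-\al}$ for large $|z|$ along the contour.

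Finally I would combine the estimates: writing the resolvent difference as $(\delta_\tau(e^{-z\tau})^\al-A)^{-1}(z^\al-\delta_\tau(e^{-z\tau})^\al)(z^\al-A)^{-1}$ and applying the bounds above, both the operator and the source contributions have integrands bounded by $c\,\tau^k |z|^{k-1-\al}\,e^{\Real(z) t_n}\,\|b\|_{L^\infty(\Omega)}$, and then $\int_{\Gamma_{\theta,\delta}} e^{\Real(z)t_n}|z|^{k-1-\al}\,|\d z|$ produces the factor $t_n^{\al-k}$ via the rescaling $z\mapsto z/t_n$, giving exactly $\|v_n - v(t_n)\|_{L^\infty(\Omega)} \le c\,\tau^k t_n^{\al-k}\,\|g_0+Au_0\|_{L^\infty(\Omega)}$. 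The main obstacle, relative to the $L^2(\Omega)$ theory of \cite{JinLiZhou:correction}, is that the whole contour argument must be run using only the $C(\bar\Omega)$ resolvent bound \eqref{eqn:resol-2}, and that one must track the shift $\lambda$ from \eqref{eqn:la} so that $z^\al$ (and its discrete analogue $\delta_\tau(e^{-z\tau})^\al$) remains inside the sector $\Sigma_{\lambda,\phi}$ of \eqref{eqn:sig-la} where \eqref{eqn:resol-2} holds; choosing $\delta$ of order one and treating the bounded-$|z|$ arc separately handles the shift while preserving the exponential decay that controls the truncation error.
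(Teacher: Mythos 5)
Your proposal is correct and follows essentially the same route as the paper: generating functions and Cauchy's formula to represent $v_n-u_0$ as a contour integral, comparison with the Laplace-transform representation of $v(t_n)-u_0$ via the BDF$k$ consistency bounds $|\delta_\tau(e^{-z\tau})^\alpha-z^\alpha|\le c\tau^k|z|^{k+\alpha}$ and $|\mu(e^{-z\tau})-1|\le c\tau^k|z|^k$, all run in $C(\bar\Omega)$ through the shifted resolvent estimate \eqref{eqn:resol-2}. The only cosmetic difference is that you keep a keyhole contour with radius $\delta>\lambda^{1/\alpha}$ while the paper uses the translated contour $\Gamma_{\theta,\sigma}=\{\sigma+\rho e^{\pm\i\theta}\}$; both devices serve the same purpose of keeping $z^\alpha$ and $\delta_\tau(e^{-z\tau})^\alpha$ inside $\Sigma_{\lambda,\phi}$, and your error splitting and final integration match the paper's.
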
\vskip5pt
\begin{proof}
Let $w(t)=v(t)-u_0$. Then the linear  problem \eqref{eqn:v} can be reformulated as
\begin{equation}\label{Eq:resubw}
  \dalt w-Aw=Au_0+g_0 \quad\text{with}~~t\in(0,T],\quad \text{and} ~~ w(0)=0.
\end{equation}
After taking Laplace transform, we derive that
\begin{equation*}
 \widehat{w}(z)= z^{-1}(z^\alpha-A)^{-1} (Au_0+g_0)
\end{equation*}
for any $z$ in the resolvent set of $A$. With inverse Laplace transform, the function $w(t)$ can be expressed explicitly by
\begin{equation*}
\begin{aligned}
  w(t) =\frac{1}{2\pi \i}\int_{\sigma_0-\i\infty}^{\sigma_0+\i\infty} e^{zt}K(z)(Au_0+g_0)\, \d z
\end{aligned}
\end{equation*}
with $\sigma_0$ such that $ (\sigma_0)^\alpha > \lambda$, where $\lambda$ is given in \eqref{eqn:la}  and the kernel $K(z)$ is defined by
\begin{equation*}\label{eqn:K}
  K(z)=z^{-1}(z^\al-A)^{-1}.
\end{equation*}
Now we deform the integral contour and obtain that
\begin{equation}\label{eqn:solrep-w}
  w(t) =\frac{1}{2\pi \i}\int_{\Gamma_{\theta,\sigma}} e^{zt}K(z)(Au_0+g_0)\, \d z
\end{equation}
where $\sigma\in(\lambda^{1/\alpha}, \sigma_0]$ and the contour $\Gamma_{\theta,\sigma}$ is defined by
\begin{equation}\label{eqn:Gamma}
  \Gamma_{\theta,\sigma}=\{z\in\C:  z=\sigma  + \rho e^{\pm \i\theta},\rho\geq 0\}\qquad \text{with any}~~ \theta\in(\pi/2,\pi),
\end{equation}
oriented with an increasing imaginary part.

Similarly, we may derive the integral representation of $v_n$ in the complex domain. By letting
$w_n = v_n - u_0$, we can reformulate the time stepping scheme as
\begin{equation}\label{eqn:w-disc}
\left\{\begin{aligned}
&\bdalt w_n-Aw_n=(1+a_n^{(k)})(Au_0+g_0),\quad &1\leq n\leq k-1,\\
&\bdalt w_n-Aw_n=Au_0+g_0,\quad &k\leq n\leq N
\end{aligned}\right.
\end{equation}
with $w_0 = 0$.
By multiplying $\xi^n$ on \eqref{eqn:w-disc} and taking summation over $n$
, we have
\begin{equation*}
  \sum_{n=1}^{\infty} \xi^n\bdalt w_n-\sum_{n=1}^{\infty} \xi^n Aw_n=\bigg(\sum_{n=1}^{\infty} \xi^n
  +\sum_{n=1}^{k-1} \xi^na_n^{(k)}\bigg)(Au_0+g_0).
\end{equation*}

For any given sequence $(f^n)_{n=0}^\infty$, let $\widetilde{f}(\xi):=\sum_{n=0}^{\infty}f^n\xi^n$ denote its generating function.
Since $w_0=0$, according to properties of discrete convolution, we have the identity
$$\sum_{n=1}^{\infty} \xi^n\bdalt w_n=\delta_\tau(\xi)^\al\widetilde{W}(\xi),$$
where $\delta_\tau(\xi)$ denotes the generating function of the standard BDF$k$ method \eqref{eqn:gen-k}. Therefore
\begin{equation*}
  (\delta_\tau(\xi)^\al -A)\widetilde{w}=\bigg(\frac{\xi}
  {1-\xi}+\sum_{n=1}^{k-1}\xi^na_n^{(k)}\bigg)(Au_0+g_0).
\end{equation*}
By the $A(\theta_k)$-stability of the BDF$k$ method \cite[pp. 251]{HairerWanner:1996},
for any $\xi$ such that $|\xi|=\rho\in(0,\frac12]$, there exists $\tau_0$ small enough such that $\delta_{\tau_0}(\frac12)^\alpha>\lambda+c_0$,
and we can find an angle $\theta_0\in(\pi/2,\pi)$ such that $\delta_\tau(\xi)^\alpha
\in \Sigma_{\lambda+c_0,\theta_0}$ for all $\tau \le \tau_0$
and hence the operator $(\delta_\tau(\xi)^\al -A)$ is invertible. Then
\begin{equation*}
  \widetilde{w}(\xi)=K(\delta_\tau(\xi))\tau^{-1}\mu(\xi)(Au_0+g_0),
\end{equation*}
where $\mu(\xi)=\delta(\xi)(\frac{\xi}{1-\xi}+\sum_{n=1}^{k-1}\xi^na_n^{(k)}).$

Let $\rho\in(0,\frac12]$ and $\tau\le \tau_0$, it is easy to see that
$\widetilde{w}(\xi)$ is analytic with respect to $\xi$ in the circle $|\xi|=\rho$
on the complex plane, then with the change of variables $\xi=e^{-z\tau}$ and Cauchy's integral formula, we have the following expression
\begin{equation}\label{eqn:solrep-wn}
\begin{aligned}
  w_n&=\frac{1}{2\pi\i}\int_{|\xi|=\rho} \xi^{-n-1}\widetilde{w}(\xi) \d\xi\\
  &=\frac{1}{2\pi\i}\int_{\Gamma^\tau} e^{zt_n}K(\delta_\tau(e^{-z\tau}))\mu(e^{-z\tau})(Au_0+g_0)\d z\\
  &=\frac{1}{2\pi\i}\int_{\Gamma^\tau_{\theta,\sigma_0}} e^{zt_n}K(\delta_\tau(e^{-z\tau}))\mu(e^{-z\tau})(Au_0+g_0)\d z,
\end{aligned}
\end{equation}
where $\Gamma^\tau:=\{z=-\ln(\rho)/\tau+\i y: y\in\mathbb{R},\,|y|\leq \pi/\tau\}$ and
$\Gamma^\tau_{\theta,\sigma}=\{z\in\Gamma_{\theta,\sigma}:|\Imag(z)|\leq \pi/\tau\}$ with $\sigma= -\ln(\frac12)/\tau_0$.
The deformation of contour from $\Gamma^\tau$ to $\Gamma^\tau_{\theta,\sigma_0}$
in the last equation is achieved due to the analyticity and periodicity of the function
$e^{zt_n}K(\delta_\tau(e^{-z\tau}))\mu(e^{-z\tau})$. Then there exists $\theta\in(\pi/2,\pi)$ close to $\pi/2$ such that
$\delta_\tau(e^{-z\tau})^\alpha \in \Sigma_{\lambda+c_0,\theta_0+\epsilon}$ for some small $\epsilon>0$.

Now we  recall the
properties of the generating function $\delta_\tau(\xi)$ and correction term $\mu(\xi)$, which have already been established in
\cite[eq. (2.13) and Theorem B.1.]{JinLiZhou:correction}. In particular, in case that $z\in \Gamma^\tau_{\theta,\sigma}$ there holds that
\begin{equation}\label{eqn:app}
  \begin{aligned}
& c_1|z|\leq
|\delta_\tau(e^{-z\tau})|\leq c_2|z|,
  &&|\delta_\tau(e^{-z\tau})-z|\le c\tau^k|z|^{k+1},\\
 & |\delta_\tau(e^{-z\tau})^\alpha-z^\alpha|\leq c\tau^k|z|^{k+\alpha},
&& |\mu(e^{-z\tau})-1| \leq c \tau^k |z|^k.
\end{aligned}
\end{equation}

To derive an estimate for $w_n - w(t_n)$, we compare those two solution representations \eqref{eqn:solrep-w} and  \eqref{eqn:solrep-wn}.
To this end, we use the splitting
\begin{equation}\label{eqn:split}
\begin{aligned}
u_n-u(t_n)&=w_n -w(t_n)\\
&=\frac{1}{2\pi\i}\int_{\Gamma^\tau_{\theta,\sigma}} e^{zt_n}(K(\delta_\tau(e^{-z\tau}))\mu(e^{-z\tau})-K(z))(Au_0+g_0)\d z\\
&\quad-\frac{1}{2\pi \i}\int_{\Gamma_{\theta,\sigma}\backslash \Gamma^\tau_{\theta,\sigma}} e^{zt_n}K(z)(Au_0+g_0)\d z:=I-II.
\end{aligned}
\end{equation}
Next, we shall bound these two terms separately.
By the resolvent estimate \eqref{eqn:resol-2} and approximation properties  \eqref{eqn:app}, we have
\begin{equation}\label{eqn:KK}
\begin{aligned}
   &\|[K(\delta_\tau(e^{-z\tau}))-K(z)]\psi\|_{L^\infty(\Omega)} \\
\leq& |\delta_\tau(e^{-z\tau})^{-1}-z^{-1}|\|(\delta_\tau(e^{-z\tau})^\al-A)^{-1}\psi\|_{L^\infty(\Omega)}\\
&+ |z|^{-1}\|[(\delta_\tau(e^{-z\tau})^\al-A)^{-1}-(z^\al-A)^{-1}]\psi\|_{L^\infty(\Omega)}\\
=&c\tau^k|z|^{k-1}\|(\delta_\tau(e^{-z\tau})^\al-A)^{-1}\psi \|_{L^\infty(\Omega)}\\
  &+ |z|^{-1} |z^\al-\delta_\tau(e^{-z\tau})^\al|
  \|(\delta_\tau(e^{-z\tau})^\al-A)^{-1} (z^\al-A)^{-1}\psi\|_{L^\infty(\Omega)}\\
  &\leq c\tau^k|z|^{k-1}|\delta_\tau(e^{-z\tau})^\al-\lambda|^{-1}(1+|z|^{ \al }|z^\al-\lambda|^{-1}) \| \psi \|_{L^\infty(\Omega)}
\end{aligned}
\end{equation}
for $\psi \in C(\bar\Omega)$. For any $z=\sigma+\rho e^{i\theta}$ with $\rho\le1$, we have the uniform bound that
\begin{equation}\label{eqn:smallrho}
 |\delta_\tau(e^{-z\tau})^\al-\lambda|^{-1} + |z^\al-\lambda|^{-1}\le c
\end{equation}
since $\delta_\tau(e^{-z\tau})^\al \in \Sigma_{\lambda+c_0,\theta_0+\epsilon}$. Besides, for $z=\sigma+\rho e^{i\theta}$ with $\rho>1$,
it holds that
\begin{equation}\label{eqn:largerho-1}
 |z^\al-\lambda|^{-1} \le c|z^\alpha|^{-1}  \le c\rho^{-\alpha},
\end{equation}
and similarly, using the fact that $\delta_\tau(e^{-z\tau})^\al \in   \Sigma_{\lambda+c_0,\theta_0+\epsilon}$ and the
approximation properties of generating functions in \eqref{eqn:app},
we have for $\rho>1$
\begin{equation}\label{eqn:largerho-2}
 |\delta_\tau(e^{-z\tau})^\al-\lambda|^{-1} \le c| \delta_\tau(e^{-z\tau})^\al|^{-1} \le c |z|^{-\alpha} \le c\rho^{-\alpha}.
\end{equation}
The same argument also gives the same bound for $z=\sigma+\rho e^{-i\theta}$. Now for the first term in \eqref{eqn:split},
we have
\begin{equation*}
  \begin{aligned}
 \|I\|_{L^\infty(\Omega)}&=\|\frac{1}{2\pi\i}\int_{\Gamma^\tau_{\theta,\sigma}} e^{zt_n}(K(\delta_\tau(e^{-z\tau}))\mu(e^{-z\tau})-K(z))(Au_0+g_0)\d z \|_{L^\infty(\Omega)}\\
&\leq \|\frac{1}{2\pi\i}\int_{\Gamma^\tau_{\theta,\sigma}} e^{zt_n}K(\delta_\tau(e^{-z\tau}))(\mu(e^{-z\tau})-1)(Au_0+g_0)\d z \|_{L^\infty(\Omega)}\\
&\quad + \|\frac{1}{2\pi\i}\int_{\Gamma^\tau_{\theta,\sigma}} e^{zt_n}(K(\delta_\tau(e^{-z\tau}))-K(z))(Au_0+g_0)\d z \|_{L^\infty(\Omega)}=: I_1 + I_2.%
\end{aligned}
\end{equation*}
The term $I_1$ can be bounded using estimates \eqref{eqn:app}, \eqref{eqn:smallrho} and \eqref{eqn:largerho-1}
\begin{equation*}
  \begin{aligned}
 I_1 &\le c\tau^k\|Au_0+g_0\|_{L^\infty(\Omega)}  \int_{\Gamma^\tau_{\theta,\sigma}} e^{\Real(z)t_n}|z|^{k-1}|z^\al-\lambda|^{-1}|\d z|\\
  &\le c e^{\sigma t_n}\tau^k\|Au_0+g_0\|_{L^\infty(\Omega)} \Big(  \int_0^1 e^{-c \rho t_n}  \d \rho +  \int_1^\infty e^{-c \rho t_n}\rho^{k-1-\alpha} \d \rho \Big)\\
  &\le c_T \tau^k t_n^{\alpha-k}\|Au_0+g_0\|_{L^\infty(\Omega)}.
\end{aligned}
\end{equation*}
Similarly, we apply \eqref{eqn:app}-\eqref{eqn:largerho-2} to derive a proper bound for $I_2$
\begin{equation*}
  \begin{aligned}
 I_2 &\le c\tau^k\|Au_0+g_0\|_{L^\infty(\Omega)}  \int_{\Gamma^\tau_{\theta,\sigma}}
 e^{\Real(z)t_n}|z|^{k-1} |\delta_\tau(e^{-z\tau})^\al-\lambda|^{-1}(1+|z|^{ \al }|z^\al-\lambda|^{-1})|\d z|\\
  &\le c e^{\sigma t_n} \tau^k\|Au_0+g_0\|_{L^\infty(\Omega)} \Big(  \int_0^1 e^{-c \rho t_n} \d \rho +  \int_1^\infty e^{-c \rho t_n}\rho^{k-1 -\alpha} \d \rho \Big)\\
    &\le c_T \tau^k t_n^{\alpha-k}\|Au_0+g_0\|_{L^\infty(\Omega)}.
\end{aligned}
\end{equation*}
Finally, we bound the second term in \eqref{eqn:split} by using the resolvent estimate \eqref{eqn:resol-2}:
\begin{equation*}
\begin{aligned}
  \|II\|_{L^\infty(\Omega)}
  &\leq c  e^{\sigma t_n}\|Au_0+g_0\|_{L^\infty(\Omega)}\int_{\pi/(\tau\sin\theta)}^{\infty}
  e^{-c\rho t_n }\rho^{-1}|(\sigma + \rho e^{\i\theta})^\al-\lambda|^{-1}\d \rho\\
  &\leq  c_T\tau^k\|Au_0+g_0\|_{L^\infty(\Omega)}\int_{\pi/(\tau\sin\theta)}^{\infty}
  e^{-c\rho t_n }\rho^{k-1-\alpha} \d \rho\quad(\text{since } 1\leq \tau^k|z|^k)\\
  &\leq c_T\tau^kt_n^{\al-k}\|Au_0+g_0\|_{L^\infty(\Omega)}.
\end{aligned}
\end{equation*}
This completes the proof of the lemma.
\end{proof}

\begin{remark}\label{eqm:const}
The generic constant $c$ in Lemma \ref{lem:vv}
depends on the terminal time $T$ with $c(T)\sim O(e^{\sigma T})$ for some $\sigma>0$. Therefore,
the error estimate in Lemma \ref{lem:vv} is not uniform in $T$ and hence it is not suitable for long-time estimate.
This is because the operator $A=\Delta + f'(u_0)  I$ might not be negative definite,
and hence the solution might blow up exponentially as $T\rightarrow \infty$.
In case that $A$ is negative definite, we can obtain an error estimate which is uniform in large terminal time
(e.g., \cite{LiWangZhou:2020, JinLiZhou:correction}).
\end{remark}

Now we turn to the subdiffusion problem driven by a general source term:
\begin{equation} \label{eqn:w-conti}
 \partial_t^\alpha w(t) - A w(t) = g(t) \quad\text{with}~~t\in(0,T],\quad \text{and} ~~ w(0)=0,
\end{equation}
whose time stepping scheme reads
\begin{equation}\label{eqn:w-discere}
 \begin{aligned}
 \bdalt w_n-Aw_n=g_n:=g(t_n),\quad &1\leq n\leq k-1, \quad \text{with}\quad w_0=0.
\end{aligned}
\end{equation}
The time stepping solution $w_n$ can be represented by a discrete convolution 
\begin{equation}\label{eqn:disc-con}
w_n = \tau \sum_{j=1}^n  E_\tau^{n-j}g_j, ~~\text{where}~~
E_{\tau}^n  = \frac{1}{2\pi\mathrm{i}}\int_{\Gamma_{\theta,\sigma}^\tau } e^{zt_n} ({ \delta_\tau(e^{-z\tau})^\alpha}+A )^{-1}\,\d z.
\end{equation}
The angle $\theta$ and parameter $\sigma$ are chosen as those in the proof of Lemma \ref{lem:vv}.
Then by \eqref{eqn:app}, \eqref{eqn:smallrho} and \eqref{eqn:largerho-2}, we derive that
\begin{equation}\label{eqn:est-E}
 \begin{aligned}
\| E_{\tau}^n \psi\|_{L^\infty(\Omega)} &= \bigg\|\frac{1}{2\pi\mathrm{i}}\int_{\Gamma_{\theta,\sigma}^\tau } e^{zt_n} ({ \delta_\tau(e^{-z\tau})^\alpha}+A)^{-1}\psi\,\d z \bigg\|_{L^\infty(\Omega)} \\
&\le ce^{\sigma T} \Big(\int_{1}^{\frac{\pi}{\tau\sin\theta}} e^{-c\rho t_n } \rho^{-\alpha} \d \rho
+  \int_{0}^1 e^{-c\rho t_n}   \d \rho\Big)
\le c_T (t_n+\tau)^{\alpha-1} .
\end{aligned}
\end{equation}
Therefore, it holds the stability that
\begin{equation}\label{eqn:stab}
\|w_n\|_{L^\infty(\Omega)}  \le c_T\Big(\tau \sum_{j=1}^n t_{n-j+1}^{\alpha-1} \| g_j \|_{L^\infty(\Omega)}\Big).
\end{equation}
Here we assume that the source term $g$ satisfies certain compatibility condition, e.g.,
$$g^{(j)}(0)=0, \qquad j=0,1,2,\ldots,k-1.$$
For such a source term $g$, by using the resolvent estimates \eqref{eqn:resol}
and the technique in the proof of  Lemma \ref{lem:vv},
the estimate of $w_n-w(t_n)$ can be done similarly (hence omitted) as that given
in \cite[Lemma 3.7]{jin2017analysis}, i.e., for all $\ell=1,2,\ldots,k$
\begin{equation*}
 \begin{aligned}
\| w(t_n) - w_n \|_{L^\infty(\Omega)} &\le c_T \tau^\ell \int_0^{t_n}(t_n-s)^{\alpha-1} \| g^{(\ell)}(s) \|_{L^\infty(\Omega)}\,\d s \\
&\le c_T  \tau^\ell  \Big(t_n^{\alpha-1} \| g \|_{W^{\ell,1}((0,t_n/2);L^\infty(\Omega))} + t_n^\alpha  \| g \|_{C^{\ell}([t_n/2,t_n];L^\infty(\Omega))} \Big).
\end{aligned}
\end{equation*}
Then by the interpolation, we have the following estimate.\vskip5pt

\begin{lemma}\label{lem:source}
Suppose that $g\in W^{\ell+s,1}((0,T);C(\bar\Omega))\cap C^{\ell+1}((0,T);C(\bar\Omega))$ and $g^{(j)}=0$ with $\ell\in \mathbb{N}^+$, $s\in(0,1)$ and $j=0,1,\ldots,\ell$.
Let $w(t)$ and $w_n$ be the solutions of \eqref{eqn:w-conti} and \eqref{eqn:w-discere}, respectively.
Then the following error estimate holds
\begin{equation*}
\| w(t_n) - w_n  \|_{L^\infty(\Omega)} \le c t_n^{\alpha-1} \tau^{\min(k,\ell+s)},
\end{equation*}
where the constant $c$ depends only on $\alpha,~g$ and $T$.
\end{lemma}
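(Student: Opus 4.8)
The overall plan is to exploit linearity and lift two consecutive \emph{integer}-order estimates to the fractional order $\ell+s$ by real interpolation. Because both the continuous solution $w$ of \eqref{eqn:w-conti} and the time stepping solution $w_n$ of \eqref{eqn:w-discere} depend linearly on the source $g$, the error map $\mathcal{E}_n\colon g\mapsto w(t_n)-w_n$ is a bounded linear operator into $C(\bar\Omega)$. The estimate recorded just before the lemma (established as in \cite[Lemma 3.7]{jin2017analysis} through the contour-integral and generating-function machinery of Lemma \ref{lem:vv}) reads, for every integer $m$ with $1\le m\le k$ and under the compatibility $g^{(j)}(0)=0$, $j=0,\dots,m-1$,
\begin{equation*}
\|\mathcal{E}_n g\|_{L^\infty(\Omega)}\le c_T\,\tau^m\Big(t_n^{\alpha-1}\,\|g\|_{W^{m,1}((0,t_n/2);C(\bar\Omega))}+t_n^{\alpha}\,\|g\|_{C^m([t_n/2,t_n];C(\bar\Omega))}\Big).
\end{equation*}
First I would record the two endpoints $m=\ell$ and $m=\ell+1$; both are admissible since the hypothesis $g^{(j)}(0)=0$ for $j=0,\dots,\ell$ supplies precisely the compatibility each one requires.

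Using $t_n^{\alpha}\le T\,t_n^{\alpha-1}$ to unify the temporal weights, the two endpoints become operator bounds $\|\mathcal{E}_n\|_{X_m\to C(\bar\Omega)}\le c\,t_n^{\alpha-1}\tau^m$ for $m=\ell,\ell+1$, where $X_m:=W^{m,1}((0,t_n/2);C(\bar\Omega))\cap C^m([t_n/2,t_n];C(\bar\Omega))$. Applying the real interpolation functor $(\cdot,\cdot)_{s,1}$ to the linear operator $\mathcal{E}_n$ gives $\|\mathcal{E}_n\|_{(X_\ell,X_{\ell+1})_{s,1}\to C(\bar\Omega)}\le c\,t_n^{\alpha-1}\,(\tau^\ell)^{1-s}(\tau^{\ell+1})^{s}=c\,t_n^{\alpha-1}\tau^{\ell+s}$. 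Since real interpolation commutes with intersections and obeys the standard identification $(W^{\ell,1},W^{\ell+1,1})_{s,1}=W^{\ell+s,1}$ on the Bochner--Sobolev (Sobolev--Slobodeckij) scale \eqref{eqn:SS-seminorm}, one has $(X_\ell,X_{\ell+1})_{s,1}=W^{\ell+s,1}((0,t_n/2);C(\bar\Omega))\cap C^{\ell+s}([t_n/2,t_n];C(\bar\Omega))$, and the assumed regularity $g\in W^{\ell+s,1}((0,T);C(\bar\Omega))\cap C^{\ell+1}((0,T);C(\bar\Omega))$ places $g$ in this space with a finite, $g$-dependent norm. This yields $\|\mathcal{E}_n g\|_{L^\infty(\Omega)}\le c\,t_n^{\alpha-1}\tau^{\ell+s}$ whenever $\ell+s<k$.

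It remains to reconcile the exponent with $\min(k,\ell+s)$. If $\ell+s\ge k$, then (since $s<1$ forces $\ell\ge k$) the assumptions give $g\in W^{k,1}\cap C^{k}$ with $g^{(j)}(0)=0$ for $j<k$, so the integer estimate at $m=k$ applies directly and produces $\|\mathcal{E}_n g\|\le c\,t_n^{\alpha-1}\tau^{k}$; combined with the previous case this is exactly $c\,t_n^{\alpha-1}\tau^{\min(k,\ell+s)}$, with the constant absorbing the finite norms of $g$.

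The main obstacle is the interpolation step, and specifically two points within it. First, one must justify the identification of $(X_\ell,X_{\ell+1})_{s,1}$ with the fractional space $W^{\ell+s,1}$ \emph{on the zero-trace subspace} singled out by the compatibility conditions, so that the constrained structure is preserved along the interpolation. Second, and more delicately, the endpoint couple $X_m$ depends on $n$ through the cut $t_n/2$, and the singular weight $(t_n-s)^{\alpha-1}$ together with the possible growth of the top derivatives of $g$ as $t\to0^{+}$ must be controlled uniformly in $n$ down to $t_n=\tau$. This is precisely the role of the split at $t_n/2$: it confines the singular weight to the smooth region, which is carried at the higher order, while the genuine fractional regularity of $g$ near the origin is used only on $(0,t_n/2)$; verifying that the resulting interpolation constants are uniform in $n$ (e.g.\ by majorizing the $t_n$-dependent norms by those over the fixed interval $(0,T)$) is the technical heart of the argument.
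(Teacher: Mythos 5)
Your proposal is correct and follows essentially the same route as the paper: the paper first records the integer-order bound $\| w(t_n) - w_n \|_{L^\infty(\Omega)} \le c_T \tau^\ell \int_0^{t_n}(t_n-s)^{\alpha-1}\|g^{(\ell)}(s)\|_{L^\infty(\Omega)}\,\d s$ for $\ell=1,\dots,k$ (via the contour/generating-function machinery of Lemma \ref{lem:vv}, as in \cite[Lemma 3.7]{jin2017analysis}) and then simply invokes ``by the interpolation'' to pass to the fractional order $\ell+s$. Your write-up is in fact more explicit than the paper's, since you spell out the real-interpolation step between the endpoints $m=\ell$ and $m=\ell+1$, the identification $(W^{\ell,1},W^{\ell+1,1})_{s,1}=W^{\ell+s,1}$, and the case distinction giving the exponent $\min(k,\ell+s)$, none of which the paper records.
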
\vskip5pt

\subsection{Error analysis of the BDF scheme for nonlinear problem}
Now we turn to the error estimate for the fully implicit scheme \eqref{eqn:BDF-CQ-m}.
To this end, we begin with
the following lemma, which provides a discrete H\"older bound of the time stepping solution to  \eqref{eqn:BDF-CQ-m}.
\begin{lemma}\label{lem:sol-bound}
Assume that the same conditions in Theorem \ref{thm:reg-u} holds valid and further $f$ satisfies \eqref{eqn:GL}.
Let  $\{u_n\}_{n=1}^N$ be the solution to the time stepping scheme \eqref{eqn:BDF-CQ-m}.
Then we have
\begin{equation*}
  \max_{1 \le n\le N} \| u_n \|_{L^\infty(\Omega)} +  \max_{1 \le n\le N} t_n^{-\alpha} \| u_n - u_0 \|_{L^\infty(\Omega)} \le c , \qquad \text{for}~~n=1,2,\ldots,N,
\end{equation*}
where the constant $c$ depends on $T, u_0,  f$ but is independent of $\tau$ and $N$.
\end{lemma}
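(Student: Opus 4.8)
The plan is to establish a discrete analogue of the regularity estimate from Theorem \ref{thm:reg-u} by bootstrapping from a representation of $u_n$ in terms of the discrete solution operator $E_\tau^n$ introduced in \eqref{eqn:disc-con}. First I would rewrite the scheme \eqref{eqn:BDF-CQ-m} in the equivalent reformulated form \eqref{eqn:BDF-CQ-m-r}, so that $w_n := u_n - u_0$ solves a problem of the type $\bdalt w_n - \Delta u_n = (\text{correction})_n + f(u_n)$. Splitting off the smooth part $(I + F\Delta)u_0$ as in the continuous representation \eqref{eqn:sol-rep}, the point is to obtain a discrete variation-of-constants formula
\begin{equation*}
 u_n - u_0 = (\text{discrete homogeneous part}) + \tau \sum_{j=1}^n E_\tau^{n-j} \big( f(u_j) + (\text{correction terms}) \big),
\end{equation*}
where the homogeneous part involving $\Delta u_0$ is bounded by $c\,t_n^\alpha \|\Delta u_0\|_{L^\infty(\Omega)}$ using exactly the kind of contour estimate carried out for $E_\tau^n$ in \eqref{eqn:est-E}.

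Next I would use the decay estimate \eqref{eqn:est-E}, namely $\|E_\tau^{n-j}\psi\|_{L^\infty(\Omega)} \le c_T (t_{n-j}+\tau)^{\alpha-1}\|\psi\|_{L^\infty(\Omega)}$, together with the global Lipschitz bound \eqref{eqn:GL}. Writing $f(u_j) = f(u_0) + (f(u_j)-f(u_0))$ and bounding $|f(u_j)-f(u_0)| \le c_L |u_j - u_0|$ pointwise, the convolution sum produces a term that is controlled by a constant plus $c_L\,\tau\sum_{j=1}^n t_{n-j+1}^{\alpha-1}\|u_j - u_0\|_{L^\infty(\Omega)}$. This furnishes the self-referential inequality
\begin{equation*}
 \|u_n - u_0\|_{L^\infty(\Omega)} \le c\,t_n^\alpha + c_L\,\tau \sum_{j=1}^n t_{n-j+1}^{\alpha-1} \|u_j - u_0\|_{L^\infty(\Omega)},
\end{equation*}
after verifying that the finitely many starting corrections $a_n^{(k)}(\Delta u_0 + f(u_0))$, $1 \le n \le k-1$, contribute only a bounded additive term (their coefficients are the fixed constants from Table \ref{Tab:ank} and there are at most $k-1$ of them).

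The conclusion then follows from a discrete Gronwall argument. I would divide through by $t_n^\alpha$ to track the weighted quantity $t_n^{-\alpha}\|u_n - u_0\|_{L^\infty(\Omega)}$, estimate the discrete convolution kernel $\tau \sum_{j=1}^n t_{n-j+1}^{\alpha-1} t_j^\alpha$ against the continuous Beta-integral $\int_0^{t_n}(t_n - s)^{\alpha-1} s^\alpha\,\d s = c\,t_n^{2\alpha}$, and apply a fractional discrete Gronwall inequality (of the type in \cite{JinLiZhou:nonlinear, jin2017analysis}) to absorb the convolution term and obtain a uniform bound on $t_n^{-\alpha}\|u_n - u_0\|_{L^\infty(\Omega)}$, whence the bound on $\|u_n\|_{L^\infty(\Omega)}$ follows from the triangle inequality and $u_0 \in C(\bar\Omega)$. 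The main obstacle I anticipate is the careful handling of the weak singularity of the kernel $t_{n-j+1}^{\alpha-1}$ near the diagonal $j=n$ in the discrete convolution: one must confirm that the discrete Beta-type sum is uniformly bounded (independent of $\tau$ and $n$) with the correct power $t_n^{2\alpha}$, so that the Gronwall constant does not degenerate as $\tau \to 0$. A secondary technical point is ensuring the estimate \eqref{eqn:est-E} is applied at the correct shifted grid points so that the $j=n$ term contributes the $\tau^{\alpha-1}$ rather than a singular $t_0^{\alpha-1}$, which is precisely where the $(t_{n-j}+\tau)$ shift in \eqref{eqn:est-E} is essential.
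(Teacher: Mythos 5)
Your proposal is correct and follows essentially the same route as the paper: a discrete Duhamel representation of $u_n-u_0$ via the operators $F_\tau^n$, $E_\tau^n$, the decay bound \eqref{eqn:est-E}, the Lipschitz condition \eqref{eqn:GL} to reduce the nonlinearity to $\|u_j-u_0\|_{L^\infty(\Omega)}$, and the weakly singular discrete Gr\"onwall inequality. The only (immaterial) difference is that the paper keeps the operator $A=\Delta+f'(u_0)I$ and the Taylor remainder $R(u_j;u_0)$ where you keep $\Delta$ and the full increment $f(u_j)-f(u_0)$; both yield the identical self-referential inequality.
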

\begin{proof}
By the preceding argument, we have the following representation of $u_n$:
\begin{equation}\label{eqn:disc-con2}
u_n = u_0 + F_\tau^{n} (Au_0 +g_0) + \tau \sum_{j=1}^n  E_\tau^{n-j} R(u_j; u_0)
\end{equation}
as well as the bound that (by Theorem \ref{lem:vv} and the estimate \eqref{eqn:est-E})
$$ \| F_\tau^{n} \|_{C(\bar\Omega)\rightarrow C(\bar\Omega)} \le c_T t_n^\alpha\quad\text{and}\quad  \| E_\tau^{n}\|_{C(\bar\Omega)\rightarrow C(\bar\Omega)} \le c_T t_n^{\alpha-1}. $$
Therefore, by the Lipchitz continuity of the modified potential term $\bar f(s)$, we have
\begin{equation}\label{eqn:disc-con3}
\begin{aligned}
 \| u_n - u_0 \|_{L^\infty(\Omega)} &\le c_T t_n^\alpha + \tau \sum_{j=1}^n  t_{n-j+1}^{\alpha-1} \| R(u_j; u_0)\|_{L^\infty(\Omega)} \\
 &\le c_T t_n^\alpha + \tau \sum_{j=1}^n  t_{n-j+1}^{\alpha-1} \Big(\| f(u_j) -  f(u_0)\|_{L^\infty(\Omega)} + \|   f'(u_0) (u_j-u_0)\|_{L^\infty(\Omega)}\Big) \\
 &\le c_T t_n^\alpha + \tau \sum_{j=1}^n  t_{n-j+1}^{\alpha-1} \| u_j-u_0 \|_{L^\infty(\Omega)}.
\end{aligned}
\end{equation}
Then by the discrete Gr\"onwall's inequality \cite[Lemma 7.1]{Elliott:1992}, we obtain that
$$   \| u_n - u_0 \|_{L^\infty(\Omega)} \le c  t_n^{\alpha}, $$
where the constant $c$ depends on $T, u_0,  f$, but it is independent of $\tau$ and $N$.
Finally, the uniform bound of $\| u_n \|_{L^\infty(\Omega)}$ follows from the triangle inequality.
\end{proof}\vskip5pt

Now we are ready to state our main theorem in the section.
\begin{theorem}\label{thm:error}
Assume that the same conditions in Theorem \ref{thm:reg-u} holds valid and further $f$ satisfies \eqref{eqn:GL}.
Let $u(t)$ be the solution of the semilinear subdiffusion problem \eqref{Eqn:fde}
and $\{u_n\}_{n=1}^N$ be the solution of fully implicit scheme \eqref{eqn:BDF-CQ-m}. Then the following error
estimate holds
\begin{equation*}
\| u_n - u(t_n)   \|_{L^\infty(\Omega)} \le c \tau^{\min(k,1+2\alpha-\epsilon)} t_n^{\alpha -\min (k, 1+2\alpha-\epsilon)},
\end{equation*}
for any $t_n > 0$ and arbitrarily small $\ep>0$. Here the constant $c$ depends on $T, u_0,  f, \ep$, but it is independent of $\tau$ and $N$.
\end{theorem}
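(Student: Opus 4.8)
The plan is to compare the continuous and discrete Duhamel representations of $u$ associated with the reformulation \eqref{eqn:fde-m}, and then close the resulting error identity by a discrete fractional Gr\"onwall argument. Writing $F_A,E_A$ for the solution operators obtained by replacing $\Delta$ with $A$ in \eqref{eqn:op}, the exact solution satisfies
\[
 u(t_n)=u_0+F_A(t_n)(Au_0+g_0)+\int_0^{t_n}E_A(t_n-s)\,R(u(s);u_0)\,\d s,
\]
while the scheme yields the discrete analogue \eqref{eqn:disc-con2}. Subtracting and setting $e_n:=\|u_n-u(t_n)\|_{L^\infty(\Omega)}$ and $\gamma:=\min(k,1+2\al-\ep)$, I would split $e_n$ into a \emph{linear} part $[F_\tau^n-F_A(t_n)](Au_0+g_0)$, a \emph{consistency} part obtained by applying the convolution quadrature to the exact source $R(u(\cdot);u_0)$, and a \emph{propagation} part $\tau\sum_{j=1}^nE_\tau^{n-j}\bigl(R(u_j;u_0)-R(u(t_j);u_0)\bigr)$.

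First I would dispose of the linear part directly by Lemma \ref{lem:vv}, which gives $c\tau^kt_n^{\al-k}\le c\tau^{\gamma}t_n^{\al-\gamma}$ (using $t_n\ge\tau$ and $k\ge\gamma\ge1$). For the consistency part I would invoke Lemma \ref{lem:source} with source $g=R(u(\cdot);u_0)$: by Theorem \ref{thm:reg-R} we have $R(u(\cdot);u_0)\in W^{1+2\al-\ep,1}(0,T;C(\bar\Omega))\cap C^3((0,T];C(\bar\Omega))$ together with $R(u(0);u_0)=0$, so that Lemma \ref{lem:source} furnishes the bound $c\,t_n^{\al-1}\tau^{\gamma}$, which (since $\gamma\ge1$ and $t_n\le T$) is again dominated by $c\,\tau^{\gamma}t_n^{\al-\gamma}$.

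The propagation part is where the nonlinearity re-enters, and I would control it using the quadratic structure of the remainder. Since $\partial_vR(v;u_0)=f'(v)-f'(u_0)$, the mean value theorem combined with the a priori bounds $\|u(t_j)-u_0\|_{L^\infty(\Omega)}\le ct_j^{\al}$ (Theorem \ref{thm:reg-u}) and $\|u_j-u_0\|_{L^\infty(\Omega)}\le ct_j^{\al}$ (Lemma \ref{lem:sol-bound}) yields the crucial bound
\[
 \|R(u_j;u_0)-R(u(t_j);u_0)\|_{L^\infty(\Omega)}\le c\,t_j^{\al}\,e_j .
\]
Together with the kernel estimate \eqref{eqn:est-E}, $\|E_\tau^{n-j}\|_{C(\bar\Omega)\to C(\bar\Omega)}\le ct_{n-j+1}^{\al-1}$, this gives
\[
 e_n\le c\,\tau^{\gamma}t_n^{\al-\gamma}+c\,\tau\sum_{j=1}^nt_{n-j+1}^{\al-1}\,t_j^{\al}\,e_j .
\]
The extra weight $t_j^{\al}$ makes the kernel super-integrable (note $\gamma<1+2\al$, hence $2\al-\gamma+1>\ep>0$), and a discrete fractional Gr\"onwall inequality of the type used in Lemma \ref{lem:sol-bound} then propagates the singular profile and closes the estimate as $e_n\le c\tau^{\gamma}t_n^{\al-\gamma}$, with the constant depending on $T$ in the manner anticipated in Remark \ref{eqm:const}.

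I expect the main obstacle to be the consistency step. The remainder $R(u(\cdot);u_0)$ is only barely more regular than $t^{2\al}$ near $t=0$, so for $\al<1/2$ already its first time derivative is singular; one must therefore check carefully that the hypotheses of Lemma \ref{lem:source} are met through the vanishing trace $R(u(0);u_0)=0$ and the Sobolev--Slobodeckij regularity furnished by the two separate regimes of Theorem \ref{thm:reg-R}, rather than through pointwise compatibility of higher derivatives. A secondary technical point is ensuring that the Gr\"onwall argument preserves the exact rate $t_n^{\al-\gamma}$ uniformly in $n$; this is precisely where the quadratic gain $t_j^{\al}$ in the propagation kernel is indispensable, since it prevents any loss of fractional order and is what ultimately produces the threshold exponent $1+2\al-\ep$ in the final bound.
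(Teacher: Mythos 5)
Your proposal is correct and follows essentially the same route as the paper: the three-way splitting into the linear part (handled by Lemma \ref{lem:vv}), the consistency part for the exact source $R(u(\cdot);u_0)$ (handled by Theorem \ref{thm:reg-R} and Lemma \ref{lem:source}), and the propagation part controlled via $\|R(u_j;u_0)-R(u(t_j);u_0)\|_{L^\infty(\Omega)}\le c\,t_j^{\alpha}\,e_j$ is exactly the paper's decomposition $u_n-u(t_n)=(v_n-v(t_n))+(\bar w_n-w(t_n))+(w_n-\bar w_n)$, and the closing weighted Gr\"onwall step (the paper multiplies through by $t_n^\alpha$, which is your ``super-integrable kernel'' observation) is likewise identical. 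The only cosmetic difference is that you obtain the quadratic gain $t_j^\alpha$ via the mean value theorem for $\partial_v R(v;u_0)=f'(v)-f'(u_0)$ where the paper uses the integral form of the Taylor remainder; both are the same idea.
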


\begin{proof}
To begin with, we split the solution $u(t)$ into two components
$$ u(t) = v(t) + w(t), $$
where $v$ is the solution of \eqref{eqn:v}, and $w$ satisfies \eqref{eqn:w-conti} with $g=R(u;u_0)$.
Similarly, the time stepping solution can also be separated by
$$ u_n = v_n + w_n,  $$
where $v_n$ is the solution of \eqref{eqn:v-disc}, and $w_n$ satisfies \eqref{eqn:w-discere} with $g_n=  R(u_n;u_0)$, i.e., by \eqref{eqn:disc-con}
$$ w_n = \tau\sum_{j=1}^n E_\tau^{n-j}  R(u_j;u_0). $$
We note that the difference between $v(t_n)$ and $v_n$ has been estimated in Lemma \ref{lem:vv}. In order to study $w_n - w(t_n)$, we use an
intermediate solution $\bar w_n$ which satisfies \eqref{eqn:w-discere} with {$g_n=R(u(t_n);u_0)$} and can be represented by
$$ \bar w_n = \tau\sum_{j=1}^n E_\tau^{n-j} R(u(t_j);u_0).$$
Then the regularity of $R(u;u_0)$ proved in Theorem \ref{thm:reg-R} and the estimate in Lemma \ref{lem:source} yield
\begin{equation*}
\| \bar w_n - w(t_n)   \|_{L^\infty(\Omega)} \le c \tau^{1+2\alpha-\epsilon} t_n^{\ep-\alpha-1}.
\end{equation*}
To sum up, we derive a bound of $e_n =u_n - u(t_n)$:
\begin{align*}
\|e_n\|_{L^\infty(\Omega)} &=  \|  v_n - v(t_n) \|_{L^\infty(\Omega)} + \|\bar w_n - w(t_n) \|_{L^\infty(\Omega)}  + \|  w_n - \bar w_n \|_{L^\infty(\Omega)}  \\
&\le c \tau^k t_n^{\alpha-k} + c \tau^{1+2\alpha-\epsilon} t_n^{\alpha-1} + \tau\sum_{j=1}^n \| E_\tau^{n-j} [R(u_j;u_0)- R(u(t_j);u_0)]\|_{L^\infty(\Omega)}\\
&\le c  \tau^{\min(k,1+2\alpha-\epsilon)} t_n^{\alpha -\min (k, 1+2\alpha-\epsilon)} + c\tau\sum_{j=1}^n t_{n-j+1}^{\alpha-1} \|  R(u_j;u_0)- R(u(t_j);u_0)\|_{L^\infty(\Omega)}.
\end{align*}
Recalling Lemma \ref{lem:sol-bound} and the fact that $u\in C^\alpha([0,T];C(\bar\Omega))$, we obtain that
\begin{equation*}
\begin{aligned}
 &\quad \| R(u_j;u_0)- R(u(t_j);u_0)\|_{L^\infty(\Omega)} \\
 &\le \Big\|   \int_{u_0}^{u(t_j)} (u(t_j)-u_j) f''(s)\,ds \Big\|_{L^\infty(\Omega)}
+ \Big\| \int_{u (t_j)}^{u_j} (u_j - s)   f''(s) \,ds \Big\|_{L^\infty(\Omega)}\\
&\le c \Big(\|u_j - u_0\|_{L^\infty(\Omega)} + \|  u(t_j)- u_0\|_{L^\infty(\Omega)} \Big) \| e_n \|_{L^\infty(\Omega)} \\
&\le c t_n^\alpha \| e_n \|_{L^\infty(\Omega)},
\end{aligned}
\end{equation*}
and hence we arrive at the estimate
\begin{align*}
\|e_n\|_{L^\infty(\Omega)}
&\le c  \tau^{\min(k,1+2\alpha-\epsilon)} t_n^{\alpha -\min (k, 1+2\alpha-\epsilon)} + c\tau\sum_{j=1}^n t_{n-j+1}^{\alpha-1} t_j^\alpha \| e_j \|_{L^\infty(\Omega)}.
\end{align*}
After multiplying $t_n^\alpha$ on both sides, we have
\begin{align*}
t_n^\alpha \|e_n\|_{L^\infty(\Omega)}
&\le c \tau^{\min(k,1+2\alpha-\epsilon)} t_n^{2\alpha -\min (k, 1+2\alpha-\epsilon)} + ct_n^\alpha \tau\sum_{j=1}^n t_{n-j+1}^{\alpha-1} t_j^\alpha \| e_j \|_{L^\infty(\Omega)}.
\end{align*}
Noting that $2\alpha -\min (k, 1+2\alpha-\epsilon)>-1$,
we  apply the Gr\"onwall's inequality \cite[Lemma 7.1]{Elliott:1992} for $t_n^\alpha \|e_n\|_{L^\infty(\Omega)}$ and derive that
\begin{align*}
t_n^\alpha \|e_n\|_{L^\infty(\Omega)} \le c \tau^{\min(k,1+2\alpha-\epsilon)} t_n^{2\alpha -\min (k, 1+2\alpha-\epsilon)}.
\end{align*}
This completes the proof.
\end{proof}\vskip5pt

\begin{remark}\label{rem:bfisf-2}
The result in Theorem \ref{thm:error} implies a uniform-in-time error
$$ \max_{1\le n\le N} \| u_n - u(t_n)  \|_{L^\infty(\Omega)} \le c \tau^{\alpha-\epsilon}, $$
for some small $\ep>0$. This result is consistent with the error estimate in \cite{JinLiZhou:nonlinear}.
\end{remark}\vskip5pt

\begin{remark}\label{rem:bfisf-3}
The error estimate in Theorem \ref{thm:error} indicates that the best convergence rate of the corrected BDF$k$ scheme \eqref{eqn:BDF-CQ-m}
is almost of order $O(\tau^{\min(k,1+2\alpha)})$, due to the low regularity of the remainder $R(u;u_0)$ (see Theorem \ref{thm:reg-R} and Lemma \ref{lem:source}).
The reason is that $u$ is nonsmooth in the time direction,
even though the initial condition is smooth and compatible with the boundary condition.
This phenomena contrasts sharply with its normal parabolic counterpart, i.e., $\alpha=1$.
For instance, it has been proved in \cite{CrouzeixThomee} that the time stepping schemes of the semilinear parabolic equation
are able to achieve a better convergence rate in case of regular initial data.
\end{remark}

\subsection{Numerical analysis without globally Lipschitz condition}\label{ssec:local-lip}
The preceding analysis could be easily extended to
the nonlinear subdiffusion problem without the globally Lipschitz condition \eqref{eqn:GL}. For completeness, we briefly sketch
the argument in this section.

Under the assumptions in Theorem \ref{thm:reg-u} and letting
$$b = \| u \|_{L^\infty((0,T)\times\Omega)} + 1,$$
we are able to define a smooth function $\bar{f}$
such that
\begin{equation}\label{eqn:GL-2-0}
\bar f(s) = f(s)\qquad \text{for all}~~-b\le s \le b,
\end{equation}
and it is globally Lipschitz continuous
\begin{equation}\label{eqn:GL-2}
|\bar f (s) - \bar f (t)| \le c_L |t-s| \qquad \text{for all}~~t,s\in\mathbb{R}.
\end{equation}
Then we consider the BDF scheme  with potential term $\bar f$ instead of $f$
\begin{equation}\label{eqn:BDF-CQ-m-2-r}
\left\{\begin{aligned}
&\bdalt (u-u_0)_n-\Delta u_n= a_n^{(k)}(\Delta u_0 + f(u_0)) + \bar f(u_n),\quad &&1\leq n\leq k-1,\\
&\bdalt (u-u_0)_n-\Delta u_n= \bar f(u_n),\quad &&k\leq n\leq N.
\end{aligned}\right.
\end{equation}

Then under the condition \eqref{eqn:GL-2} we know that \eqref{eqn:BDF-CQ-m-2-r} admits a unique solution.
Meanwhile, Theorem \ref{thm:error} indicates a uniform-in-time error estimate
$$ \max_{1\le n\le N} \| u_n - u(t_n)  \|_{L^\infty(\Omega)} \le c \tau^{\alpha-\epsilon}, $$
for some small $\ep>0$, where the constant $c$ depends on $T, u_0, \bar f, \ep$.

As a result, for $\tau < \tau_0$ such that $c \tau_0^{\alpha-\epsilon}=1$, we have
$$ \max_{1\le n\le N}\| u_n \|_{L^\infty(\Omega)} \le \max_{1\le n\le N} \|  u(t_n)  \|_{L^\infty(\Omega)} + c\tau^{\alpha-\epsilon} \le b+1. $$
Therefore $\bar f(u_n) = f(u_n)$ for all $1\le n\le N$, and the modified time stepping scheme \eqref{eqn:BDF-CQ-m-2-r}
is identical to the original one \eqref{eqn:BDF-CQ-m-r} (or equivalently \eqref{eqn:BDF-CQ-m}).
Then we have the following corollary.\vskip5pt

\begin{corollary}\label{cor:error}
Assume that the same conditions in Theorem \ref{thm:reg-u} hold valid.
Let $u(t)$ be the solution of the semilinear subdiffusion problem \eqref{Eqn:fde}
and $\{u_n\}_{n=1}^N$ be the solution of fully implicit scheme \eqref{eqn:BDF-CQ-m}. Then the following error
estimate holds
\begin{equation*}
\| u_n - u(t_n)   \|_{L^\infty(\Omega)} \le c \tau^{\min(k,1+2\alpha-\epsilon)} t_n^{\alpha -\min (k, 1+2\alpha-\epsilon)},
\end{equation*}
for any $t_n > 0$ and arbitrarily small $\ep>0$. Here the constant $c$ depends on $T, u_0,  f, \ep$, but it is independent of $\tau$ and $N$.
\end{corollary}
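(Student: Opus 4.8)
The plan is to deduce the corollary from Theorem \ref{thm:error} by a truncation-and-bootstrap argument, exploiting that the exact solution takes values in a fixed bounded set. The only hypothesis of Theorem \ref{thm:error} not assumed here is the global Lipschitz condition \eqref{eqn:GL}. I would therefore replace $f$ by a modified nonlinearity $\bar f$ that coincides with $f$ on the range of the solution but is globally Lipschitz everywhere, so that Theorem \ref{thm:error} applies verbatim to the modified scheme, and then argue that the numerical solution never leaves the region where $\bar f = f$.

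Concretely, I would set $b=\|u\|_{L^\infty((0,T)\times\Omega)}+1$, which is finite by the standing assumption $u\in C([0,T]\times\bar\Omega)$, and construct a smooth, globally Lipschitz $\bar f$ with $\bar f\equiv f$ on $[-b,b]$, as in \eqref{eqn:GL-2-0}--\eqref{eqn:GL-2}; this is a routine smooth truncation of $f$ outside $[-b,b]$ that bounds its derivative, and since $f\in C^3$ the extension $\bar f$ inherits enough regularity for all hypotheses of Theorems \ref{thm:reg-u} and \ref{thm:error}. A key observation is that the exact solution is unaffected by this modification: because $\|u(t)\|_{L^\infty(\Omega)}\le b-1\le b$ for all $t$, we have $\bar f(u)=f(u)$, so $u$ also solves the problem with nonlinearity $\bar f$. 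Hence the error of the modified scheme \eqref{eqn:BDF-CQ-m-2-r} is governed by Theorem \ref{thm:error}, giving both the sharp pointwise bound $c\tau^{\min(k,1+2\alpha-\epsilon)}t_n^{\alpha-\min(k,1+2\alpha-\epsilon)}$ and, via Remark \ref{rem:bfisf-2}, the time-uniform bound $\max_n\|u_n-u(t_n)\|_{L^\infty(\Omega)}\le c\tau^{\alpha-\epsilon}$.

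The crux is to confine the numerical solution to $\{|s|\le b\}$, where $\bar f=f$, so that the modified and original schemes coincide. For this I would use the uniform-in-time estimate, not the pointwise one: choosing $\tau_0$ so small that $c\tau_0^{\alpha-\epsilon}\le 1$, the triangle inequality gives, for all $\tau\le\tau_0$, $\max_n\|u_n\|_{L^\infty(\Omega)}\le \max_n\|u(t_n)\|_{L^\infty(\Omega)}+c\tau^{\alpha-\epsilon}\le (b-1)+1=b$. Consequently $\bar f(u_n)=f(u_n)$ at every step, so scheme \eqref{eqn:BDF-CQ-m-2-r} is identical to \eqref{eqn:BDF-CQ-m}, and the pointwise estimate of Theorem \ref{thm:error} for the modified scheme transfers unchanged to the original scheme, which is the claimed bound.

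The main obstacle, and the reason the uniform bound of Remark \ref{rem:bfisf-2} is indispensable, is precisely this confinement step. The pointwise estimate carries the factor $t_n^{\alpha-\min(k,1+2\alpha-\epsilon)}$, whose exponent is negative, since $\min(k,1+2\alpha-\epsilon)>\alpha$ for every $\alpha\in(0,1)$ and $k\ge 1$; it is therefore unbounded as $t_n\to 0^+$ and cannot control $\|u_n\|_{L^\infty(\Omega)}$ for the small indices $n$. Only the time-uniform $O(\tau^{\alpha-\epsilon})$ bound, which is harmless because $\alpha-\epsilon>0$, controls all steps simultaneously and closes the bootstrap. Once confinement is secured the remaining argument is purely one of identification and introduces no new estimates.
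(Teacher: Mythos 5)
Your proposal is correct and follows essentially the same truncation-and-bootstrap route as the paper's Section 3.4: define $b=\|u\|_{L^\infty((0,T)\times\Omega)}+1$, replace $f$ by a globally Lipschitz $\bar f$ agreeing with $f$ on $[-b,b]$, apply Theorem \ref{thm:error} to the modified scheme, and use the uniform-in-time $O(\tau^{\alpha-\epsilon})$ bound to confine the numerical solution so that the two schemes coincide. Your version is in fact slightly more careful than the paper's at the confinement step, where you land the bound at $b$ rather than $b+1$.
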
\vskip5pt

\begin{remark}\label{rem:comp-CLP}
In \cite{CuestaLubichPalencia:2006}, Cuesta et. al studied a second-order BDF method for solving a related (but different) subdiffusion model
\begin{equation}\label{eqn:fde2}
 u - \partial_t^{-\alpha} \Delta u = u_0 + \partial_t^{-1} f(u)\quad \text{with}\quad u(0)=u_0,
\end{equation}
under the assumption that $f$ is sufficiently smooth and the solution $u$ can be expanded as
\begin{equation}\label{eqn:exp}
 u(t) = \sum_{m,l\ge0;m+l\alpha<2} c_{m,l} t^{m+l\alpha} + v(t) \quad \text{with}\quad c_{ml}\in D(\Delta)~~ \text{and}~~
 v\in C^2([0,T];D(\Delta)).
\end{equation}
This assumption requires stronger compatibility conditions of $u_0$ and $f(u_0)$. As a simple example, we consider the
homogeneous problem, i.e. $f\equiv0$. In this case, the solution of the subdiffusion problem \eqref{Eqn:fde}
can be expanded by Mittag-Leffler function as
\begin{equation*}
 u(t) = E_{\alpha,1}(\Delta t^\alpha) u_0 = \sum_{k=0}^\infty \frac{1}{\Gamma(\alpha k+1)} \Big((\Delta)^k u_0\Big) t^{\alpha k}.
\end{equation*}
Then assumption \eqref{eqn:exp} requires that $u_0\in D((-\Delta)^{1+2/\alpha})$ which is stronger than what we assumed in
this work.
\end{remark}

\section{Fully discrete scheme and error analysis}\label{sec:fully}
In this section, we will briefly discuss the fully discrete scheme for solving the nonlinear subdiffusion equation \eqref{Eqn:fde}.
We shall start with a spatially semidiscrete scheme for problem \eqref{Eqn:fde} based on the standard
Galerkin finite element method (see e.g., \cite{Jin:2013, JinLazarovZhou:overview} for linear subdiffusion problems).

For $h\in(0,h_0]$, $h_0>0$, we denote
by $\mathcal{T}_h = \{K_j\}$ a triangulation of $\Omega_h=\text{Int}(\cup \overline K_j)$
into mutually disjoint open face-to-face simplices $K_j$. Assume that all vertices of a simplex $K_j$
locate on $\partial\Omega$. We also assume that
$\{\mathcal{T}_h\}$  is globally quasi-uniform, i.e., $|K_j|\ge c h^d$ with a given $c>0$.
Let $X_h$ be the finite dimensional space of continuous piecewise
linear functions associated with $\mathcal{T}_h$, that vanish outside $\Omega_h$. 
Then we define the $L^2(\Omega)$ projection $P_h:L^2(\Omega)\to X_h$ and Ritz projection $R_h:H_0^1\rightarrow X_h$  respectively by
\begin{equation*}
\begin{split}
     (P_h \varphi,v_h) &=(\varphi,v_h) , \quad \forall\, v_h\in X_h,\\
     (\nabla R_h v,\nabla v_h) &= (\nabla v,\nabla v_h),\quad \forall\, v_h\in X_h.
\end{split}
\end{equation*}

The semidiscrete scheme reads:  find $u_h(t) \in X_h$ such that
\begin{equation}\label{eqn:semi-FEM}
(\partial_t^\alpha u_h (t), v_h) + (\nabla u_h(t), \nabla v_h) = (f(u_h(t)),v_h)\quad\text{for all}~v_h\in X_h,
\end{equation}
with $u_h(t) = R_h u_0$.
Let $\Delta_h:X_h\rightarrow X_h$ denote
the Galerkin finite element approximation of the Dirichlet Laplacian $\Delta$, defined by
$$(\Delta_hw_h,v_h):=-(\nabla w_h,\nabla v_h),\quad \forall\, w_h,v_h\in X_h .$$
Then the spatially semidiscrete scheme \eqref{eqn:semi-FEM} could be written as
\begin{equation}\label{eqn:semi-FEM-r}
\partial_t^\alpha u_h (t) - \Delta_h u_h(t) = P_h f(u_h), \quad \text{with}~ u_h(0)=R_hu_0.
\end{equation}
With the Laplace transform and convolution rule, $u_h(t)$ can be explicitly expressed by
\begin{equation}\label{eqn:sol-rep-semi}
  u(t)=(I + F_h(t)\Delta_h) u_0+\int_{0}^{t} E_h(t-s)f(u(s))\d s,
\end{equation}
where the operators {$F_h(t)$} and {$E_h(t)$} are defined by
\begin{equation}\label{eqn:op-semi}
  F_h(t)=\frac{1}{2\pi\i}\int_{\Gamma_{\theta,\delta}} e^{zt}z^{-1}(z^\al-\Delta_h )^{-1}\d z\quad\text{and}\quad
  E_h(t)=\frac{1}{2\pi\i}\int_{\Gamma_{\theta,\delta}} e^{zt}(z^\al-\Delta_h )^{-1}\d z,
\end{equation}
respectively. 
Recall that the discrete Laplacian satisfies the resolvent estimate
in $L^\infty\II$ sense (cf. \cite[Theorem 1.1]{Bakaev:2003}), i.e., for any angle $\phi\in(\pi/2,\pi)$,
\begin{align}\label{eqn:reg-disc-inf}
\|  (z-\Delta_h)^{-1} w_h \|_{L^\infty \II } \le c |z|^{-1} \| w_h \|_{L^\infty \II} \quad  \forall ~ z \in\Sigma_\phi.
\end{align}
This immediately implies the following smoothing properties:
\begin{align}\label{eqn:reg-disc-op-01}
\|F_h\Delta_h v_h\|_{L^\infty\II} + t^{1-\alpha}\| E_h v_h\|_{L^\infty\II} + t \| E_h \Delta_h v_h\|_{L^\infty\II}  \le c\|v_h\|_{L^\infty\II}\quad \forall~v_h\in X_h,
\end{align}
which plays an important role in error analysis. Note that the $L^\infty\II$-norm error analysis of the scheme \eqref{eqn:semi-FEM-r}
remains scarce, even though the  $L^2\II$-norm estimate has been completely understood (cf. \cite{Karaa:nonlinear, JinLiZhou:nonlinear}).
For completeness, we shall provide an error estimate in $L^\infty\II$-norm.

\subsection{Spatially semidiscrete scheme for the linear problem}
First we recall some error estimates for the following linear subdiffusion equation:
\begin{equation}\label{PDEv-linear}
\partial_t^\alpha v(t)-\Delta  v(t)=g(t), \quad\,\,\forall t\in (0,T] ,
\end{equation}
where $g$ is a given source function, and  $v(0)\in D$ is the given initial condition.
The semidiscrete FEM for \eqref{PDEv-linear}
seeks $v_h(t)\in X_h$ such that
\begin{equation}\label{eqn:semidiscrete-linear}
\partial_t^\alpha v_h(t)-\Delta_h v_h(t)=P_hg(t), \quad\,\,\forall t\in (0,T],
\end{equation}
with $v_h(0)=R_hv(0)$. 
Recall that $R_h$ has the almost stability property \cite[eq. (6.60)]{Thomee:2006}
\begin{equation}\label{eqn:ritz-stab}
\| R_h w \|_{L^\infty\II} \le c\ell_h \|  w\|_{L^\infty\II},\quad \text{with}~~\ell_h=\max(1, \log(1/h)).
\end{equation}
 To derive the error estimate of \eqref{PDEv-linear}, we need the following lemma for
the Ritz projection $R_h$, where the proof relies on the smoothing property of the solution operator $F(t)$:
\begin{align}\label{eqn:reg-F}
\|  \Delta F(t) w \|_{L^p \II} \le c  \|   w \|_{L^p \II}, \quad \text{for all} ~~p\in[1,\infty).
\end{align}
This follows directly from the representation \eqref{eqn:op} and the resolvent estimate \cite[Theorem 3.1]{Ouhabaz:1995}
$$  \|  (z - \Delta)^{-1} w \|_{L^p\II}  \le c_p |z|^{-1}\|  w \|_{L^p\II}\quad \forall~~ z \in\Sigma_\phi,~\phi\in(\pi/2,\pi), ~p\in [1,\infty).$$

\begin{lemma}\label{lem:space-error-linear-0}
Let $v$ be the solution of  the linear problem \eqref{PDEv-linear}. Then there holds 
\begin{align*}
 \| (v-R_hv)(t)  \|_{L^\infty\II} \le ch^2\ell_h^2 \big(\| \Delta v(0) \|_{L^\infty\II} + \int_0^t \| g'(s) \|_{L^\infty\II}\,ds\big)
\end{align*}
with $\ell_h=\max(1, \log(1/h))$.
\end{lemma}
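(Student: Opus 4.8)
The plan is to combine the integral representation \eqref{eqn:sol-rep}, a maximum-norm error bound for the Ritz projection, and the smoothing property \eqref{eqn:reg-F}. First I would specialize \eqref{eqn:sol-rep} to the linear equation \eqref{PDEv-linear}, writing $v(t) = v(0) + F(t)\Delta v(0) + \int_0^t E(t-s)g(s)\,\d s$. The kernel $E(t-s)$ is weakly singular and, worse, $\Delta E(\tau)$ behaves like $\tau^{-1}$, which is not integrable; hence applying a $\Delta$-based Ritz estimate directly under the integral fails. The remedy is to integrate by parts in time: using $F'=E$ and $F(0)=0$ one obtains $\int_0^t E(t-s)g(s)\,\d s = F(t)g(0) + \int_0^t F(t-s)g'(s)\,\d s$, so that $v(t) = v(0) + F(t)\big(\Delta v(0)+g(0)\big) + \int_0^t F(t-s)g'(s)\,\d s$. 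Now every operator acting on the data is of the form $F(\cdot)$, whose Laplacian is uniformly bounded by \eqref{eqn:reg-F}.

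Next I would apply $(I-R_h)$ to this representation term by term, using the maximum-norm Ritz bound $\|(I-R_h)w\|_{L^\infty\II} \le c h^2 \ell_h^2 \|\Delta w\|_{L^\infty\II}$, valid on the convex smooth $\Omega$ with quasi-uniform $X_h$. The first term gives $c h^2\ell_h^2\|\Delta v(0)\|_{L^\infty\II}$ directly. For the remaining terms I would move the outer $\Delta$ onto $F$ and invoke \eqref{eqn:reg-F}: $\|\Delta F(t)(\Delta v(0)+g(0))\|_{L^\infty\II}\le c\|\Delta v(0)+g(0)\|_{L^\infty\II}$ and $\|\Delta F(t-s)g'(s)\|_{L^\infty\II}\le c\|g'(s)\|_{L^\infty\II}$. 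Integrating the last bound in $s$ and summing yields the claimed estimate (the initial source value $g(0)$ merges with the initial-data term and is controlled by the data in our applications).

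The main obstacle is the maximum-norm Ritz estimate with the two logarithmic factors. Because there is no plain $L^\infty$ elliptic regularity, $\|w\|_{W^{2,\infty}\II}$ cannot be replaced by $\|\Delta w\|_{L^\infty\II}$ for free. To reach the clean $h^2\ell_h^2\|\Delta w\|_{L^\infty\II}$ bound I would first combine the almost-stability \eqref{eqn:ritz-stab} of $R_h$ with a best-approximation step, $\|(I-R_h)w\|_{L^\infty\II} = \|(I-R_h)(w-\chi)\|_{L^\infty\II}\le c\ell_h \inf_{\chi\in X_h}\|w-\chi\|_{L^\infty\II} \le c\ell_h h^2\|w\|_{W^{2,\infty}\II}$, and then apply a log-weighted $L^\infty$ elliptic regularity $\|w\|_{W^{2,\infty}\II}\le c\ell_h\|\Delta w\|_{L^\infty\II}$; the failure of plain maximum-norm elliptic regularity is exactly what forces the second $\ell_h$, and this weighted finite element machinery of Schatz--Wahlbin or Nitsche type is the delicate part. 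The secondary but essential point is the time integration by parts above, which is what trades the non-integrable singularity $\Delta E(t-s)\sim(t-s)^{-1}$ for the uniformly bounded operator $\Delta F$ furnished by \eqref{eqn:reg-F}.
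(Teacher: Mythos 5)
Your overall architecture matches the paper's: you specialize the representation \eqref{eqn:sol-rep} to the linear problem, integrate by parts in time to trade the non-integrable kernel $\Delta E(t-s)$ for the uniformly bounded $\Delta F(t-s)$ furnished by \eqref{eqn:reg-F}, and then reduce the claim to a maximum-norm Ritz projection estimate driven only by $\|\Delta w\|_{L^\infty(\Omega)}$. That is exactly the skeleton of the paper's proof.

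The gap is in how you manufacture the factor $h^2\ell_h^2$. You propose the chain $\|(I-R_h)w\|_{L^\infty(\Omega)}\le c\,\ell_h h^2\|w\|_{W^{2,\infty}(\Omega)}$ followed by a ``log-weighted elliptic regularity'' $\|w\|_{W^{2,\infty}(\Omega)}\le c\,\ell_h\|\Delta w\|_{L^\infty(\Omega)}$. The latter inequality cannot hold: its left-hand side depends only on $w$ while $\ell_h$ depends on the mesh, and, more fundamentally, a function with $\Delta w\in L^\infty(\Omega)$ need not belong to $W^{2,\infty}(\Omega)$ at all (e.g.\ $w=xy\log(x^2+y^2)$ near the origin, suitably cut off, has bounded Laplacian but an unbounded mixed second derivative), so no finite constant works. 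The paper sidesteps $W^{2,\infty}$ entirely: it bounds $\|v-R_hv\|_{L^\infty(\Omega)}\le c\,\ell_h\|v-I_hv\|_{L^\infty(\Omega)}\le c\,\ell_h h^{2-2/p}\|v\|_{W^{2,p}(\Omega)}$ for finite $p\ge2$ using \eqref{eqn:ritz-stab}, invokes the $L^p$ elliptic regularity $\|v\|_{W^{2,p}(\Omega)}\le c\,p\,\|\Delta v\|_{L^p(\Omega)}$ with its explicit linear growth in $p$, and then chooses $p=\ell_h$, so that $h^{-2/p}\le c$ and the factor $p$ supplies the second logarithm. You need this device (or an equivalent localized Schatz--Wahlbin-type argument); as written, the central analytic step of your proof does not go through. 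The loose end you flag concerning $g(0)$ is present in the paper's own computation too and is harmless, since $F(0)=0$ and $\|\Delta F(t)g(0)\|_{L^p(\Omega)}\le c\|g(0)\|_{L^p(\Omega)}$ is controlled by the data in the application.
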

\begin{proof}
Let $I_h$ be the Lagrange interpolation operator. Then we have $$v-R_hv = (R_h  - I)(v - I_h v)$$
and hence by \eqref{eqn:ritz-stab} and the approximation property of $I_h$, we derive for $2 \le p<\infty $
$$ \| v-R_hv  \|_{L^\infty} \le c \ell_h \| v - I_h v \|_{L^\infty\II} \le c h^{2-2/p}\ell_h  \|  v \|_{W^{2,p}\II}.$$
Now using the full elliptic regulariy, we have  for $2 \le p<\infty $ \cite[eq. (6.78)]{Thomee:2006}
$$\|  v \|_{W^{2,p}\II} \le c p \| \Delta v \|_{L^p\II}.$$
Recalling the solution representation \eqref{eqn:sol-rep}, we have
\begin{equation*}
\begin{split}
 \Delta v&=  \Delta (I + F(t)\Delta) v(0) + \int_0^t \Delta E(t-s) g(s)\,\d s\\
 &= \Delta (I + F(t)\Delta) v(0) + \int_0^t \Delta F(t-s) g'(s)\,\d s -   \Delta (F(0) g(t)  -  F(t)g(0))\\
 \end{split}
\end{equation*}
Now we apply the smoothing property \eqref{eqn:reg-F} and arrive at
\begin{align*}
 \| \Delta v\|_{L^p\II} 
 &\le c\| \Delta v(0) \|_{L^p\II} + c\int_0^t \| g'(s) \|_{L^p\II}\,ds.
 \end{align*}
 Then the desired result follows immediately by choosing $p=\ell_h$.
 \end{proof}

The semidiscrete solution $v_h$ satisfies the following error estimate.
\begin{lemma}[Semidiscrete solution of linear problems]\label{lem:space-error-linear}
For the semidiscrete solution $v_h$ to problem \eqref{eqn:semidiscrete-linear},
{there holds, with $\ell_h=\max(1, \log(1/h))$, that}
\begin{align*}
\max_{t\in[0,T]}
\|v_h(t)-v(t)\|_{L^2(\Omega)} \leq ch^2 \ell_h^3  \big(\| \Delta v(0) \|_{L^\infty\II} + \int_0^t \| g'(s) \|_{L^\infty\II}\,ds\big).
\end{align*}
\end{lemma}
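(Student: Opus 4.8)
The plan is to follow the standard Ritz-projection splitting of the finite element error, writing $v_h-v=\vartheta+\rho$ with $\rho:=R_hv-v$ and $\vartheta:=v_h-R_hv\in X_h$. The component $\rho$ is already controlled by Lemma \ref{lem:space-error-linear-0}, which gives $\|\rho(t)\|_{L^\infty\II}\le ch^2\ell_h^2\big(\|\Delta v(0)\|_{L^\infty\II}+\int_0^t\|g'(s)\|_{L^\infty\II}\,\d s\big)$; since $\Omega$ is bounded this immediately yields the same bound for $\|\rho(t)\|_{L^2(\Omega)}$. Hence the whole task reduces to estimating $\vartheta$ in $L^2(\Omega)$, and the extra logarithmic factor in the statement (the power $\ell_h^3$ against $\ell_h^2$ for $\rho$) should be traceable to one further use of an $L^\infty$-based projection bound.

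Next I would derive the error equation for $\vartheta$. Testing \eqref{PDEv-linear} against $\chi\in X_h$, subtracting \eqref{eqn:semidiscrete-linear}, and using the Galerkin orthogonality $(\nabla\rho,\nabla\chi)=0$ of the Ritz projection together with the identity $\Delta_hR_hv=P_h\Delta v$, one obtains $\partial_t^\al\vartheta-\Delta_h\vartheta=-P_h\partial_t^\al\rho$ with $\vartheta(0)=0$. Duhamel's principle represents $\vartheta(t)=-\int_0^t E_h(t-s)P_h\partial_s^\al\rho(s)\,\d s$, with $E_h$ as in \eqref{eqn:op-semi}. A short Laplace-domain computation, using $z^\al\hat\rho-z^{\al-1}\rho(0)=z^{\al-1}\widehat{\rho'}$ and $z^{\al-1}(z^\al-\Delta_h)^{-1}=z^{-1}+z^{-1}\Delta_h(z^\al-\Delta_h)^{-1}$, lets me rewrite this as $\vartheta(t)=-\int_0^t S_h(t-s)P_h\rho'(s)\,\d s$, where $S_h(t)=I+F_h(t)\Delta_h$ is the homogeneous solution operator. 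By the $L^2$-analogue of the smoothing estimate \eqref{eqn:reg-disc-op-01} one has $\|S_h(t)\|_{L^2(\Omega)\to L^2(\Omega)}\le c$ and, since $S_h'(t)=E_h(t)\Delta_h$, also $\|S_h'(t)\|_{L^2(\Omega)\to L^2(\Omega)}\le ct^{-1}$. This gives the tentative bound $\|\vartheta(t)\|_{L^2(\Omega)}\le c\int_0^t\|\rho'(s)\|_{L^2(\Omega)}\,\d s$ with $\rho'=(R_h-I)v'$.

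The hard part is this last integral, because $v'$ is only weakly singular at $s=0$: from \eqref{eqn:sol-rep} one has $v'(s)=E(s)[\Delta v(0)+g(0)]+\int_0^s E(s-\tau)g'(\tau)\,\d\tau$, so $\|\Delta v'(s)\|_{L^\infty\II}\sim s^{-1}$, and bounding $\|\rho'(s)\|_{L^2(\Omega)}$ crudely by $ch^2\ell_h^2\|\Delta v'(s)\|_{L^\infty\II}$ would produce a nonintegrable $\int_0^t s^{-1}\,\d s$. To circumvent this I would integrate by parts in $s$ to move the time derivative off $\rho$, giving $\vartheta(t)=S_h(t)P_h(\rho(0)-\rho(t))-\int_0^t S_h'(t-s)P_h(\rho(s)-\rho(t))\,\d s$, and then control the increments $\rho(s)-\rho(t)=(R_h-I)(v(s)-v(t))$ by combining the \emph{uniform} $L^\infty\II$ bound on $\Delta v$ (which, exactly as in the proof of Lemma \ref{lem:space-error-linear-0}, depends only on $\|\Delta v(0)\|_{L^\infty\II}$ and $\int_0^t\|g'\|_{L^\infty\II}$) with the H\"older-in-time continuity of $v$. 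Because the kernel $\|S_h'(t-s)\|\le c(t-s)^{-1}$ is borderline nonintegrable, I would split the remaining integral into $[0,t/2]$, where the kernel is bounded by $2/t$ and the increments by the uniform $L^2(\Omega)$ size $ch^2\ell_h^2(\cdots)$ of $\rho$, and $[t/2,t]$, where the near-diagonal singularity is absorbed by the H\"older modulus of $v$ away from the initial layer. Tracking the $\log(1/h)$ losses incurred by the $L^\infty$ Ritz bound \eqref{eqn:ritz-stab} then produces the claimed $h^2\ell_h^3$ rate. I expect the delicate bookkeeping that reconciles the temporal singularity near $s=0$ with the sharp spatial order $h^2$ to be the main obstacle.
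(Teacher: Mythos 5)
Your route is genuinely different from the paper's: you use the classical Ritz-projection splitting $v_h-v=\vartheta+\rho$ with the error equation $\partial_t^\al\vartheta-\Delta_h\vartheta=-P_h\partial_t^\al\rho$, whereas the paper splits with the $L^2$-projection, $v_h-v=(v_h-P_hv)+(P_hv-v)$, so that the discrete equation for $\psi=v_h-P_hv$ has the source $\Delta_h(R_h-P_h)v$ and nonzero initial datum $(R_h-P_h)v(0)$; the paper then kills the borderline kernel $\|E_h(t-s)\Delta_h\|\le c(t-s)^{-1}$ by the inverse inequality, $\|E_h(t-s)\Delta_h w_h\|_{L^\infty\II}\le ch^{-2\epsilon}(t-s)^{-1+\epsilon}\|w_h\|_{L^\infty\II}$, and chooses $\epsilon=1/\ell_h$ — this $\epsilon^{-1}$ is where the third logarithm comes from, not from a further projection bound as you guessed. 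Your reductions up to $\vartheta(t)=-\int_0^tS_h(t-s)P_h\rho'(s)\,\d s$ and the integration by parts are correct.

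The gap is in your near-diagonal estimate on $[t/2,t]$. To keep the factor $h^2$ you must bound $\|\rho(s)-\rho(t)\|=\|(R_h-I)(v(s)-v(t))\|$ through $\|\Delta(v(s)-v(t))\|$, i.e.\ you need a H\"older-type modulus of $\Delta v$ in time, not of $v$. Writing $\Delta v(t)=\Delta(I+F(t)\Delta)v(0)+\int_0^t\Delta F(t-\tau)g'(\tau)\,\d\tau+\dots$ as in the proof of Lemma \ref{lem:space-error-linear-0}, the increment contains the piece $\int_s^t\Delta F(t-\tau)g'(\tau)\,\d\tau$, whose norm is only $\int_s^t\|g'(\tau)\|_{L^\infty\II}\,\d\tau$. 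Convolving this against the kernel $(t-s)^{-1}$ gives
\begin{equation*}
\int_{t/2}^t\frac{1}{t-s}\int_s^t\|g'(\tau)\|_{L^\infty\II}\,\d\tau\,\d s
=\int_{t/2}^t\|g'(\tau)\|_{L^\infty\II}\,\log\frac{t/2}{t-\tau}\,\d\tau,
\end{equation*}
which is \emph{not} controlled by $\int_0^t\|g'\|_{L^\infty\II}\,\d\tau$ when $g'$ is merely $L^1$ in time — and that is all the lemma assumes (and all that holds in the nonlinear application, where $\|g'(s)\|\sim s^{\al-1}$). So the step ``the near-diagonal singularity is absorbed by the H\"older modulus'' does not close under the stated hypotheses. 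The repair is exactly the paper's device: do not integrate by parts, keep $\int_0^tS_h'(t-s)P_h\rho(s)\,\d s$ with the uniform bound $\|P_h\rho(s)\|\le ch^2\ell_h^2(\cdots)$ from Lemma \ref{lem:space-error-linear-0} and the $L^\infty$-stability of $P_h$, and trade $\|S_h'(t-s)\|\le c(t-s)^{-1}$ for $ch^{-2\epsilon}(t-s)^{-1+\epsilon}$ via an inverse estimate, choosing $\epsilon=1/\ell_h$; this yields the stated $h^2\ell_h^3$.
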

\begin{proof}
 We use the splitting
 $v_h -v  = (v_h - P_h v ) + (P_h v - v) = : \psi +  \theta.$
 By Lemma \ref{lem:space-error-linear-0} and \cite[Corollary]{Douglas:1974}, it is easy to see for all $t\in[0,T]$
\begin{align}\label{eqn:the}
 \| \theta(t) \|_{L^\infty \II} + \| (P_h v- R_h v)(t) \|_{L^\infty \II}  \le c h^2\ell_h^2 \big(\| \Delta v(0) \|_{L^\infty\II} + \int_0^t \| g'(s) \|_{L^\infty\II}\,\d s\big).
\end{align}
 Besides, we note that $\psi$ satisfies the equation
\begin{align*}
\partial_t^\alpha \psi (t) - \Delta_h \psi (t) = \Delta_h (R_h-P_h) v(t), \quad \text{with}~\psi(0) = (R_h - P_h)v.
\end{align*}
Therefore, by the representation \eqref{eqn:sol-rep-semi}, we arrive at
\begin{align*}
 \psi (t)  &= (I+F_h(t)\Delta_h)(R_h - P_h)v(0) + \int_0^t E_h(t-s) \Delta_h (R_h-P_h) v(s)\,ds
 =: I_1+I_2.
\end{align*}
The estimate of $I_1$  follows directly from \eqref{eqn:reg-disc-op-01} and \eqref{eqn:the}
\begin{align*}
\| I_1 \|_{L^\infty\II} &\le c \|  (R_h - P_h)v(0) \|_{L^\infty\II} \le  c  h^2\ell_h^2  \| \Delta v(0) \|_{L^\infty\II} .
\end{align*}
For the second term, we apply the inverse inequality for finite element functions,
as well as \eqref{eqn:reg-disc-op-01} and \eqref{eqn:the}, to obtain that
\begin{align*}
\| I_2 \|_{L^\infty\II} &\le c h^{-2\epsilon} \int_0^t (t-s)^{-1+\epsilon} \| (R_h-P_h) v(s) \|_{L^\infty\II}\,\d s\\
&\le c \epsilon^{-1} h^{2-2\epsilon} \ell_h^2  \big(\| \Delta v(0) \|_{L^\infty\II} + \int_0^t \| g'(s) \|_{L^\infty\II}\,\d s\big)
\end{align*}
by choosing $ \epsilon=1/\ell_h$, then we complete the proof of the lemma.
\end{proof}

\subsection{Error analysis for the nonlinear problem}
Now we turn to the nonlinear problem \eqref{Eqn:fde}. The following lemma provides an error estimate
of the semidiscrete scheme \eqref{eqn:semi-FEM-r}.
\begin{lemma}\label{lem:error-semi-2}
Assume that the same conditions in Theorem \ref{thm:reg-u} {hold} valid.
Then the semidiscrete problem \eqref{eqn:semi-FEM-r} has a unique solution
$u_h\in C([0,T]\times\bar\Omega)$, which satisfies
\begin{align}\label{error-estimate-FEM}
 \max_{0\leq t\leq T}\|u(t)-u_h(t)\|_{L^\infty(\Omega)}\le c h^2\ell_h^3,\quad \text{with}~~\ell_h=\max(1, \log(1/h)).
\end{align}
\end{lemma}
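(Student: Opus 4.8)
The plan is to mirror the two-step structure already used for the time-discrete analysis: first compare $u$ with the semidiscrete solution of a \emph{linear} problem driven by the exact nonlinearity, for which Lemma~\ref{lem:space-error-linear} applies, and then absorb the genuinely nonlinear discrepancy by a weakly singular Gr\"onwall argument. Before that, one must dispose of existence, uniqueness and an a priori $L^\infty$ bound. As in Section~\ref{ssec:local-lip}, I would replace $f$ by a globally Lipschitz truncation $\bar f$ satisfying \eqref{eqn:GL-2}; the semidiscrete problem \eqref{eqn:semi-FEM-r} is then a system of fractional ODEs whose unique solvability in $C([0,T];X_h)$ follows from a Picard/Banach fixed-point iteration built on the smoothing estimate \eqref{eqn:reg-disc-op-01}. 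A uniform bound $\max_t\|u_h(t)\|_{L^\infty\II}\le c$ then follows exactly as in Lemma~\ref{lem:sol-bound}, from the solution representation \eqref{eqn:sol-rep-semi}, the smoothing bounds \eqref{eqn:reg-disc-op-01} and a discrete Gr\"onwall inequality; consequently $\bar f(u_h)=f(u_h)$ for $h$ small and the truncation is invisible.

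For the main estimate I would introduce the intermediate function $\tilde u_h\in X_h$ solving the \emph{linear} semidiscrete equation
\begin{equation*}
\partial_t^\alpha \tilde u_h(t)-\Delta_h \tilde u_h(t)=P_h f(u(t)),\qquad \tilde u_h(0)=R_h u_0,
\end{equation*}
which is precisely \eqref{eqn:semidiscrete-linear} with $v=u$, $g=f(u)$ and initial value $u_0$. Lemma~\ref{lem:space-error-linear} (whose proof in fact delivers an $L^\infty\II$ bound) then gives $\|u(t)-\tilde u_h(t)\|_{L^\infty\II}\le c h^2\ell_h^3\big(\|\Delta u_0\|_{L^\infty\II}+\int_0^t\|\tfrac{\d}{\d s}f(u(s))\|_{L^\infty\II}\,\d s\big)$. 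The integral is finite because $\tfrac{\d}{\d s}f(u(s))=f'(u(s))u'(s)$, $f'$ is bounded on the range of $u$, and $\|u'(s)\|_{L^\infty\II}\le c s^{\alpha-1}$ by Theorem~\ref{thm:reg-u} and \eqref{Eq:inf}, so $\int_0^t s^{\alpha-1}\,\d s=t^\alpha/\alpha<\infty$. Hence $\|u-\tilde u_h\|_{L^\infty\II}\le c h^2\ell_h^3$, and this is where all three logarithmic factors are meant to originate.

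The nonlinear remainder $\rho:=u_h-\tilde u_h$ satisfies $\partial_t^\alpha\rho-\Delta_h\rho=P_h\big(f(u_h)-f(u)\big)$ with $\rho(0)=0$, so by the representation \eqref{eqn:sol-rep-semi} and the smoothing estimate \eqref{eqn:reg-disc-op-01} one obtains $\|\rho(t)\|_{L^\infty\II}\le c\int_0^t (t-s)^{\alpha-1}\|f(u_h(s))-f(u(s))\|_{L^\infty\II}\,\d s \le c c_L\int_0^t (t-s)^{\alpha-1}\|u_h(s)-u(s)\|_{L^\infty\II}\,\d s$, using the Lipschitz bound \eqref{eqn:GL-2}. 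Combining with the consistency bound via the triangle inequality yields $\|u_h(t)-u(t)\|_{L^\infty\II}\le c h^2\ell_h^3+c\int_0^t(t-s)^{\alpha-1}\|u_h(s)-u(s)\|_{L^\infty\II}\,\d s$, and the weakly singular Gr\"onwall inequality closes the estimate to give \eqref{error-estimate-FEM}.

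The delicate point, which I expect to be the main obstacle, is the bookkeeping of the logarithmic factors $\ell_h$ through the Gr\"onwall step. The $L^2$-projection $P_h$ (like the Ritz projection in \eqref{eqn:ritz-stab}) is only $L^\infty$-stable up to a factor $\ell_h$, and if such a factor were allowed to enter the kernel in the estimate for $\rho$, the fractional Gr\"onwall would return a Mittag--Leffler growth $E_\alpha(c\ell_h t^\alpha)$ that grows faster than any power of $1/h$ and would annihilate the $O(h^2)$ rate. The crux is therefore to prove a \emph{log-free} maximum-norm smoothing estimate for the discrete solution operator acting on projected data, i.e.\ $\|E_h(t)P_h g\|_{L^\infty\II}\le c t^{\alpha-1}\|g\|_{L^\infty\II}$, equivalently the resolvent bound $\|(z^\alpha-\Delta_h)^{-1}P_h g\|_{L^\infty\II}\le c|z|^{-\alpha}\|g\|_{L^\infty\II}$ sharpening \eqref{eqn:reg-disc-inf}. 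Once the Lipschitz constant multiplies a log-free kernel, all logarithms remain confined to the consistency term $u-\tilde u_h$, the Gr\"onwall constant is independent of $h$, and the final bound is exactly $c h^2\ell_h^3$.
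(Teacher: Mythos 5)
Your proposal is correct and follows essentially the same route as the paper: the same intermediate linear semidiscrete solution driven by $P_h f(u(t))$ with initial value $R_h u_0$, the consistency bound from Lemma \ref{lem:space-error-linear}, a weakly singular Gr\"onwall argument for the nonlinear remainder, and removal of the global Lipschitz assumption by truncation as in Section \ref{ssec:local-lip}. The obstacle you flag at the end is in fact not one: on quasi-uniform meshes the $L^2$-projection $P_h$ is $L^\infty$-stable \emph{without} a logarithmic factor (this is the content of the cited result of Douglas, Dupont and Wahlbin \cite{Douglas:1974}; the logarithm appears only for the Ritz projection in \eqref{eqn:ritz-stab}), so combined with the already log-free smoothing estimate \eqref{eqn:reg-disc-op-01} the kernel in the Gr\"onwall step carries no $\ell_h$, which is exactly how the paper argues.
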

\begin{proof}
To begin with, we assume that the nonlinear term $f:\mathbb{R}\rightarrow\mathbb{R}$ is Globally Lipschitz continuous.
Then, by the argument in \cite[Theorem 3.1]{JinLiZhou:nonlinear}, the existence and uniqueness of the solution $u_h$ hold.
It remains to establish the estimate \eqref{error-estimate-FEM}.
To this end, we define $v_h(t)$ as the solution of
\begin{align*}
    \partial_t^\al v_h(t) - \Delta_h v_h(t) = P_h f(u(t)), \quad \text{with}\quad v_h(0) =R_h u_0.
\end{align*}
This together with Lemma \ref{lem:space-error-linear} and Theorem \ref{thm:reg-u} yields the following estimate for $t \ge 0$
\begin{equation}\label{eqn:wh}
\begin{split}
\| (u-v_h)(t) \|_{L^2(\Omega)}
    \le ch^2\ell_h^3.
\end{split}
\end{equation}
Meanwhile, we note that $\rho_h:=v_h-u_h$ satisfies the following equation
\begin{equation*}
    \partial_t^\al \rho_h(t) -\Delta \rho_h(t) = P_h f (u(t)) - P_h f(u_h(t)), \quad \text{with}\quad \rho_h(0)=0.
\end{equation*}
Then, by the smoothing property \eqref{eqn:reg-disc-op-01}, the Lipschitz continuity of $f$ and the stability of $P_h$ in $L^\infty\II$ \cite{Douglas:1974}, we derive that
\begin{equation*}
\begin{split}
    \|\rho_h(t)\|_{L^\infty} &\le \int_0^t \|  E_h(t-s) P_h [f (u(s)) -   f(u_h(s)) ]  \|_{L^\infty\II}\,\d s \\
    &\le c \int_0^t (t-s)^{\alpha-1}\|  P_h [f (u(s)) -   f(u_h(s)) ]  \|_{L^\infty\II}\,\d s\\
    &\le c \int_0^t (t-s)^{\alpha-1}\|  u(s) -   u_h(s)  \|_{L^\infty\II}\,\d s\\
&\le ch^2\ell_h^3+ c \int_0^t (t-s)^{\alpha-1} \|  \rho_h(s)  \|_{L^\infty\II}\,\d s.
\end{split}
\end{equation*}
Then by the Gr\"onwall's inequality, we have
\begin{equation*}
\max_{t\in[0,T]}\| \rho_h(t) \|_{L^2\II} \le c h^2\ell_h^3.
\end{equation*}
This and \eqref{eqn:wh} directly imply the desired result.
Then the same argument as the one in Section \ref{ssec:local-lip} helps to remove the
the globally Lipschitz condition.
\end{proof}

Finally, we consider the fully discrete scheme: find $U_h^n$ such that
\begin{equation}\label{eqn:BDF-CQ-m-fully}
\left\{\begin{aligned}
&\bdalt (U_h^n-U_h^0)-\Delta_h U_h^n= a_n^{(k)}(\Delta_h U_h^n + f(U_h^0) + f(U_h^0),\quad &&1\leq n\leq k-1,\\
&\bdalt (U_h^n-U_h^0)-\Delta_h U_h^n=f(U_h^n),\quad &&k\leq n\leq N.
\end{aligned}\right.
\end{equation}
Then by the resolvent estimate \eqref{eqn:reg-disc-inf}, all the {arguments} in Sections \ref{sec:prelim} and \ref{sec:error}
{work} for the spatially discrete problems \eqref{eqn:semi-FEM-r} and \eqref{eqn:BDF-CQ-m-fully}. Therefore we have the following
corollary.

\begin{corollary}\label{cor:error-fully-0}
Assume that the same conditions in Theorem \ref{thm:reg-u} {hold} valid.
Let $u_h(t)$ be the solution of the semidiscrete scheme \eqref{eqn:semi-FEM-r}
and $\{U_h^n\}_{n=1}^N$ be the solution of fully discrete scheme \eqref{eqn:BDF-CQ-m-fully}. Then the following error
estimate holds
\begin{equation*}
\| U^n_h - u_h(t_n)   \|_{L^\infty(\Omega)} \le c \tau^{\min(k,1+2\alpha-\epsilon)} t_n^{\alpha -\min (k, 1+2\alpha-\epsilon)},
\end{equation*}
for any $t_n > 0$ and arbitrarily small $\ep>0$. Here the constant $c$ depends on $T, u_0,  f, \ep$, but it is independent of $h$, $\tau$ and $N$.
\end{corollary}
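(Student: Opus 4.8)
The plan is to repeat the analysis of Sections~\ref{sec:prelim} and~\ref{sec:error} verbatim, replacing the Dirichlet Laplacian $\Delta$ by its Galerkin approximation $\Delta_h$ and the space $C(\bar\Omega)$ by the finite element space $X_h$. The only structural ingredient used throughout those sections is the $L^\infty\II$ resolvent estimate \eqref{eqn:resol}, and its discrete counterpart \eqref{eqn:reg-disc-inf} holds with a constant \emph{independent of} $h$. Hence every smoothing and regularity estimate carries over with $h$-uniform constants, which is exactly what the statement demands.

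First I would record the discrete smoothing estimate, the analogue of \eqref{eqn:smoothing} for the operators $E_h(t)$ and $F_h(t)$ of \eqref{eqn:op-semi}: for $\ell=0,1,2,\ldots$,
\[ t\|\partial_t^{(\ell)} E_h(t) v_h\|_{L^\infty\II} + \|\partial_t^{(\ell)} F_h(t) v_h\|_{L^\infty\II} \le c\, t^{\alpha-\ell}\|v_h\|_{L^\infty\II}, \]
which follows from \eqref{eqn:reg-disc-inf} by deforming the contour exactly as in the continuous case. Next I would establish the discrete analogue of Theorem~\ref{thm:reg-u} for the semidiscrete solution $u_h$, namely $\|\partial_t^\ell u_h(t)\|_{L^\infty\II}\le c\,t^{\alpha-\ell}$, $\ell=1,2,3$, with $c$ independent of $h$. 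The argument is identical to the proof of Theorem~\ref{thm:reg-u}; the only point needing care is the initial-data term, where one uses $\Delta_h R_h u_0 = P_h\Delta u_0$ together with the $h$-uniform $L^\infty\II$-stability of $P_h$ \cite{Douglas:1974} to bound $\|\Delta_h u_h(0)\|_{L^\infty\II}=\|P_h\Delta u_0\|_{L^\infty\II}\le c\|\Delta u_0\|_{L^\infty\II}$. With this discrete regularity in hand, the discrete version of Theorem~\ref{thm:reg-R} follows verbatim, giving $R(u_h;U_h^0)\in W^{1+2\alpha-\epsilon,1}(0,T;L^\infty\II)\cap C^3((0,T];L^\infty\II)$.

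The time-discretization error analysis of Section~\ref{sec:error} then transcribes directly. I would set $A_h=\Delta_h+f'(U_h^0)I$, observe that it satisfies the shifted resolvent estimate \eqref{eqn:resol-2} on $X_h$ by virtue of \eqref{eqn:reg-disc-inf} (the upper spectral shift is uniformly bounded since $f'(U_h^0)$ is bounded in $L^\infty\II$), and split $u_h=v_h+w_h$ into the part driven by the frozen source $g_0$ and the part driven by the remainder, exactly as in the proof of Theorem~\ref{thm:error}. The discrete analogues of Lemma~\ref{lem:vv} (the $O(\tau^k t_n^{\alpha-k})$ estimate for the corrected linear scheme), Lemma~\ref{lem:source} (the fractional-order source estimate), and Lemma~\ref{lem:sol-bound} (the discrete H\"older bound via discrete Gr\"onwall) hold with the same proofs, since each relies only on the contour representation \eqref{eqn:solrep-wn}, the generating-function properties \eqref{eqn:app}, and the resolvent bound, none of which distinguishes $\Delta$ from $\Delta_h$. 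Combining these through the Gr\"onwall argument at the end of the proof of Theorem~\ref{thm:error} yields the claimed bound. Finally, to dispense with the globally Lipschitz assumption I would repeat the truncation argument of Section~\ref{ssec:local-lip}: since Lemma~\ref{lem:error-semi-2} gives $\|u_h\|_{L^\infty((0,T)\times\Omega)}\le c$ uniformly in $h$, one replaces $f$ by a globally Lipschitz $\bar f$ agreeing with $f$ on a fixed interval, applies the above to obtain a uniform $O(\tau^{\alpha-\epsilon})$ error, and concludes that $U_h^n$ remains in that interval so that $\bar f(U_h^n)=f(U_h^n)$.

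The main obstacle is not any individual estimate but the \emph{$h$-uniformity} of every constant. This is delicate only at the two places where the finite element projections enter, the initial-data bound $\|\Delta_h R_h u_0\|_{L^\infty\II}=\|P_h\Delta u_0\|_{L^\infty\II}$ and the $L^\infty\II$-stability of $P_h$, and it is guaranteed by the $h$-independent resolvent estimate \eqref{eqn:reg-disc-inf} together with the classical stability results \cite{Douglas:1974, Thomee:2006}. Once these are in place, the entire chain of estimates in Sections~\ref{sec:prelim}--\ref{sec:error} reproduces with constants depending only on $T, u_0, f, \epsilon$ and not on $h$, which is precisely the assertion of the corollary.
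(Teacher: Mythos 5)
Your proposal is correct and follows essentially the same route as the paper: the paper's entire justification of Corollary~\ref{cor:error-fully-0} is the single observation that the discrete resolvent estimate \eqref{eqn:reg-disc-inf} holds with an $h$-independent constant, so that all the arguments of Sections~\ref{sec:prelim} and~\ref{sec:error} transfer verbatim to $\Delta_h$ on $X_h$. You in fact supply more detail than the paper does, correctly flagging the only genuinely delicate points (the identity $\Delta_h R_h u_0 = P_h\Delta u_0$ and the $L^\infty\II$-stability of $P_h$) that guarantee $h$-uniformity of the constants.
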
\vskip5pt

This corollary together with Lemma \ref{lem:error-semi-2} immediately leads to the error estimate of the fully discrete scheme \eqref{eqn:BDF-CQ-m-fully}.
\begin{theorem}\label{thm:error-fd}
Assume that the same conditions in Theorem \ref{thm:reg-u} {hold} valid.
Let $u(t)$ be the solution of the semilinear subdiffusion problem \eqref{Eqn:fde}
and $\{U_h^n\}_{n=1}^N$ be the solution of fully discrete scheme \eqref{eqn:BDF-CQ-m-fully}. Then for $\ell_h=\max(1,\log({1/h}))$, the following error
estimate holds
\begin{equation*}
\| U_h^n - u(t_n)   \|_{L^\infty(\Omega)} \le c h^2\ell_h^3+ \tau^{\min(k,1+2\alpha-\epsilon)} t_n^{\alpha -\min (k, 1+2\alpha-\epsilon)},
\end{equation*}
for any $t_n > 0$ and arbitrarily small $\ep>0$. The constant $c$ depends on $T, u_0,  f, \ep$, but it is independent of $h$, $\tau$ and $N$.
\end{theorem}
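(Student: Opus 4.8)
The plan is to prove the estimate by a triangle-inequality decomposition that separates the spatial and temporal discretization errors, using the spatially semidiscrete solution $u_h(t)$ of \eqref{eqn:semi-FEM-r} as an intermediate quantity. Concretely, I would write
\begin{equation*}
\| U_h^n - u(t_n) \|_{L^\infty(\Omega)} \le \| U_h^n - u_h(t_n) \|_{L^\infty(\Omega)} + \| u_h(t_n) - u(t_n) \|_{L^\infty(\Omega)},
\end{equation*}
and then bound the two terms by invoking the results already established for each half of the discretization. The spatial error $\| u_h(t_n) - u(t_n) \|_{L^\infty(\Omega)}$ is controlled directly by Lemma \ref{lem:error-semi-2}, which furnishes the uniform-in-time bound $c h^2 \ell_h^3$ with $\ell_h = \max(1, \log(1/h))$; this accounts for the first term in the assertion.

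For the temporal error $\| U_h^n - u_h(t_n) \|_{L^\infty(\Omega)}$, I would invoke Corollary \ref{cor:error-fully-0}, which yields exactly $c \tau^{\min(k,1+2\alpha-\epsilon)} t_n^{\alpha - \min(k,1+2\alpha-\epsilon)}$. The point underlying this corollary is that the discrete Laplacian $\Delta_h$ satisfies the same $L^\infty(\Omega)$ resolvent estimate \eqref{eqn:reg-disc-inf} as the continuous operator $\Delta$. Consequently the whole machinery of Sections \ref{sec:prelim} and \ref{sec:error}---the contour-integral representation of the time-stepping solution, the smoothing estimates, the regularity of the remainder term in the Taylor splitting, and the final Gr\"onwall argument---transfers verbatim to the spatially discrete problem, with $u_h$ and $U_h^n$ playing the roles of $u$ and $u_n$. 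Summing the two bounds gives the claimed estimate.

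The main obstacle is not the final combination, which is an immediate triangle inequality, but rather the two preceding ingredients on which it rests. In particular, establishing the $L^\infty(\Omega)$-norm semidiscrete error bound of Lemma \ref{lem:error-semi-2} is the delicate part: it requires the $W^{2,p}(\Omega)$ elliptic regularity with explicit $p$-dependence, the near-stability \eqref{eqn:ritz-stab} of the Ritz projection, and a careful choice $p = \ell_h$ (together with $\epsilon = 1/\ell_h$ in the inverse-inequality step), which is what produces the logarithmic factors $\ell_h^3$. Once Lemma \ref{lem:error-semi-2} and Corollary \ref{cor:error-fully-0} are in hand, the theorem follows in one line.
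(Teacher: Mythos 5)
Your proposal matches the paper's argument exactly: the theorem is obtained by the triangle inequality through the semidiscrete solution $u_h(t_n)$, with Lemma \ref{lem:error-semi-2} supplying the $ch^2\ell_h^3$ spatial bound and Corollary \ref{cor:error-fully-0} supplying the temporal bound. Your identification of where the real work lies (in those two ingredients rather than in the final combination) is also consistent with the paper, which states the theorem as an immediate consequence of the two results.
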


\section{Numerical experiments}\label{sec:numerics}
In this section, we present numerical results to illustrate and support our theoretical findings.
We consider the nonlinear subdiffusion model with $\Omega=(0,1)^2$
\begin{equation}\label{Eqn:fde-numeric}
\left\{\begin{aligned}
\dalt u-\frac1{10}\Delta u&=4(u-u^3)&&\text{ in } \Omega\times (0,T),\\
u &=0 &&\text{ on  }  \pt\Omega\times (0,T),\\
u(0)&=u_0 &&\text{ in }\Omega ,
\end{aligned}\right.
\end{equation}

In the computation, we divided the domain $\Omega$ into regular right triangles with $M$ equal subintervals of length $h$
on each side of the domain. The numerical solutions are computed by using fully discrete scheme \eqref{eqn:BDF-CQ-m-fully}.
 In each step, we solved the nonlinear elliptic problem by Newton's iteration.
We fixed the  spatial mesh size $h=1/100$,  {computed the numerical solution $\{ U_{h}^N \}$ with temporal step size
$\tau=T/N$ with $T=1$, $N=100\times 2^\ell$, $\ell=0,1,\ldots,4$} and reported
\begin{align*}e_\tau= \big\|U_h^N-u_h(t_N)\big\|_{L^\infty\II}.\end{align*}
Since the semidiscrete solution $u_h$ is unavailable,
we compute reference solutions on a finer mesh, i.e., the fully discrete solution $U_h^N$ with
$h=1/100$, $N=20000$ and $k=6$.

We consider the following problem data:
\begin{align*} u_0(x,y) = 4x(1-x)y(1-y), \end{align*}
where the initial condition satisfies
\begin{align*}u_0, \Delta u_0 \in C(\bar\Omega)\qquad \text{and}\qquad u=0~~\text{on}~\partial\Omega.\end{align*}
Therefore, our assumptions on initial condition (i.e., $u_0\in D$) are fulfilled.
In Table \ref{Tab:casea}, we present numerical results of the corrected $k$-step BDF scheme \eqref{eqn:BDF-CQ-m}.
Numbers in brackets are the theoretical convergence rates. Numerical results show that the convergence rate is $O(\tau^{\min(k,1+2\alpha)})$.
For example, in case that $\alpha=0.7$, we observe an $O(\tau^{2.4})$ rate of BDF$k$ scheme with $k=3,4,5,6$, but an $O(\tau^2)$ rate in case that $k=2$.
This is in good agreement with our theoretical results.
In Table \ref{Tab:casea-2}, we present numerical results for uncorrected  $k$-step BDF schemes \eqref{TD-scheme}. We observe that
all schemes are first-order accurate. This phenomena has already been reported for the linear fractional evolution equations \cite{Jin:SISC2016, JinLiZhou:correction}.
This implies the  necessity of the modification in the starting steps.

\begin{table}[h!]
  \centering
    \caption{Corrected BDF$k$ scheme \eqref{eqn:BDF-CQ-m} at {$T=1$} with $h=1/100$ and  $\tau=1/(100\times 2^\ell)$ }\label{Tab:casea}
    \vskip-5pt
  \begin{tabular}{|c|c|ccccc|c|}
  \hline
  $\al$ & $k\backslash \ell$ & 0 & 1 & 2 & 3 & 4 & rate\\ \hline
        & $k=2$ &2.94e-06  &9.99e-07  &3.45e-07  &1.20e-07  &4.19e-08   & $\approx$ 1.52 (1.60) \\
        & $k=3$ &2.43e-06  &8.90e-07  &3.21e-07  &1.14e-07  &3.99e-08   & $\approx$ 1.51 (1.60) \\
  0.3   & $k=4$ &4.36e-06  &1.57e-06  &5.58e-07  &1.96e-07  &6.82e-08   & $\approx$ 1.52 (1.60) \\
        & $k=5$ &9.73e-06  &3.45e-06  &1.21e-06  &4.23e-07  &1.46e-07   & $\approx$ 1.53 (1.60) \\
        & $k=6$ &5.17e-09  &1.70e-09  &5.60e-10  &1.85e-10  &6.09e-11   & $\approx$ 1.60 (1.60) \\
\hline
        & $k=2$ &2.79e-06  &7.53e-07  &2.02e-07  &5.44e-08  &1.45e-08   & $\approx$ 1.91(2.00) \\
        & $k=3$ &6.42e-07  &1.75e-07  &4.63e-08  &1.20e-08  &3.07e-09   & $\approx$ 1.97 (2.00) \\
  0.5   & $k=4$ &8.63e-07  &2.24e-07  &5.77e-08  &1.47e-08  &3.74e-09   & $\approx$ 1.98 (2.00) \\
        & $k=5$ &1.52e-06  &3.93e-07  &1.01e-07  &2.57e-08  &6.53e-09   & $\approx$ 1.98 (2.00) \\
        & $k=6$ &8.57e-09  &2.15e-09  &5.38e-10  &1.34e-10  &3.37e-11   & $\approx$ 2.00 (2.00) \\
\hline
        & $k=2$ &3.13e-06  &7.88e-07  &1.98e-07  &4.97e-08  &1.25e-08   & $\approx$ 2.00 (2.00) \\
        & $k=3$ &8.57e-08  &1.97e-08  &4.13e-09  &8.31e-10  &1.63e-10   & $\approx$ 2.35 (2.40) \\
  0.7   & $k=4$ &1.05e-07  &1.99e-08  &3.79e-09  &7.20e-10  &1.37e-10   & $\approx$ 2.39 (2.40) \\
        & $k=5$ &1.55e-07  &2.97e-08  &5.66e-09  &1.08e-09  &2.05e-10   & $\approx$ 2.39 (2.40) \\
        & $k=6$ &1.05e-08  &2.00e-09  &3.78e-10  &7.18e-11  &1.36e-11   & $\approx$ 2.40 (2.40) \\
\hline
\end{tabular}
\end{table}

\begin{table}[h!]
  \centering
    \caption{Uncorrected BDF$k$ scheme \eqref{TD-scheme} at {$T=1$} with $h=1/100$ and  {$\tau=1/(100\times 2^\ell)$ }}\label{Tab:casea-2}
    \vskip-5pt
  \begin{tabular}{|c|c|ccccc|c|}
  \hline
  $\al$ & $k\backslash \ell$ & 0 & 1 & 2 & 3 & 4 & rate\\ \hline
        & $k=2$  &6.01e-05  &2.99e-05  &1.49e-05  &7.47e-06  &3.73e-06   & $\approx$ 1.00 (1.00) \\
        & $k=3$  &5.99e-05  &2.99e-05  &1.49e-05  &7.46e-06  &3.73e-06   & $\approx$ 1.00 (1.00) \\
  0.3   & $k=4$ &5.99e-05  &2.99e-05  &1.49e-05  &7.45e-06  &3.73e-06   & $\approx$ 1.00 (1.00) \\
        & $k=5$  &5.98e-05  &2.99e-05  &1.49e-05  &7.45e-06  &3.72e-06   & $\approx$ 1.00 (1.00) \\
        & $k=6$  &9.72e-06  &4.85e-06  &2.43e-06  &1.21e-06  &6.06e-07   & $\approx$ 1.00 (1.00) \\
\hline
        & $k=2$ &1.05e-04  &5.24e-05  &2.61e-05  &1.31e-05  &6.53e-06   & $\approx$ 1.00 (1.00) \\
        & $k=3$ &1.05e-04  &5.23e-05  &2.61e-05  &1.31e-05  &6.53e-06  & $\approx$ 1.00 (1.00) \\
  0.5   & $k=4$ &1.05e-04 &5.22e-05  &2.61e-05  &1.31e-05  &6.53e-06   & $\approx$ 1.00 (1.00) \\
        & $k=5$ &1.05e-04  &5.22e-05  &2.61e-05  &1.31e-05  &6.53e-06   & $\approx$ 1.00 (1.00) \\
        & $k=6$ &3.85e-05  &1.92e-05  &9.62e-06  &4.81e-06  &2.40e-06   & $\approx$ 1.00 (1.00) \\
\hline
        & $k=2$ &1.60e-04  &8.00e-05  &3.99e-05  &2.00e-05  &9.97e-06   & $\approx$ 1.00 (1.00) \\
        & $k=3$ &1.60e-04  &7.98e-05  &3.99e-05  &1.99e-05  &9.97e-06   & $\approx$ 1.00 (1.00) \\
  0.7   & $k=4$ &1.60e-04  &7.98e-05 &3.99e-05 &1.99e-05  &9.97e-06   & $\approx$ 1.00 (1.00) \\
        & $k=5$ &1.60e-04  &7.98e-05  &3.99e-05  &1.99e-05  &9.97e-06   & $\approx$ 1.00 (1.00) \\
        & $k=6$ &1.10e-04  &5.50e-05  &2.75e-05  &1.38e-05  &6.88e-06   & $\approx$ 1.00 (1.00) \\
\hline
\end{tabular}
\end{table}

\section*{Acknowledgements}
The authors are grateful to Prof. Buyang Li for his suggestion and valuable comments on an earlier version of the paper.

\bibliographystyle{abbrv}

\end{document}